	\def\MR#1{}
\newcommand{\bZ}{\mathbb{Z}}    
\newcommand{\bQ}{\mathbb{Q}}    
\newcommand{\bR}{\mathbb{R}}    
\newcommand{\bC}{\mathbb{C}}    
\newcommand{\bP}{\mathbb{P}}    
\newcommand{\cC}{\mathcal{C}}   
\newcommand{\cD}{\mathcal{D}}   
\newcommand{\Kthree}{\mathrm{K3}} 
\newcommand{\Sp}{\mathrm{Sp}}
\newcommand{\Lag}{\mathrm{Lag}} 
\newcommand{\SLag}{\mathrm{SLag}}   
\newcommand{\cK}{\mathcal{K}}   
\newcommand{\cQ}{\mathcal{Q}}   
\newtheorem*{thm*}{Theorem}
\newtheorem*{prop*}{Proposition}
\newtheorem*{cor*}{Corollary}
\newtheorem{thm}{Theorem}[section]
\newtheorem{prop}[thm]{Proposition}
\newtheorem{cor}[thm]{Corollary}
\newtheorem{lemma}[thm]{Lemma}
\numberwithin{equation}{section}
\theoremstyle{definition}
\newtheorem{rmk}[thm]{Remark}
\begin{document}
\title{\bf Decomposition of Lagrangian classes\\ on K3~surfaces}
\author{Kuan-Wen~Lai, Yu-Shen~Lin, and Luca~Schaffler}
\date{}

\newcommand{\ContactInfo}{{
\bigskip\footnotesize

\bigskip
\noindent K.-W.~Lai,
\textsc{Mathematisches Institut der Universit\"at Bonn\\
Endenicher Allee 60, 53121 Bonn, Deutschland}\par\nopagebreak
\noindent\textsc{Email:} \noindent\texttt{kwlai@math.uni-bonn.de}

\bigskip
\noindent Y.-S.~Lin,
\textsc{Department of Mathematics \& Statistics\\
Boston University\\
Boston, MA 02215, USA}\par\nopagebreak
\noindent\textsc{Email:} \texttt{yslin@bu.edu}

\bigskip
\noindent L.~Schaffler,
\textsc{Dipartimento di Matematica e Fisica\\
Universit\`a degli Studi Roma Tre\\
Rome, 00146, Italy}\par\nopagebreak
\noindent\textsc{Email:} \texttt{luca.schaffler@uniroma3.it}
}}

\maketitle
\thispagestyle{titlepage}

\begin{abstract}
We study the decomposability of a Lagrangian homology class on a K3 surface into a sum of classes represented by special Lagrangian submanifolds, and develop criteria for it in terms of lattice theory.
As a result, we prove the decomposability on an arbitrary K3 surface with respect to the K\"ahler classes in dense subsets of the K\"ahler cone.
Using the same technique, we show that the K\"ahler classes on a K3 surface which admit a special Lagrangian fibration form a dense subset also. This implies that there are infinitely many special Lagrangian $3$-tori in any log Calabi--Yau $3$-fold.
\end{abstract}

\section{Introduction}
\label{sect:intro}

Special Lagrangian submanifolds were introduced by Harvey--Lawson \cite{HL82} as an important class of calibrated submanifolds.
They are area minimizers within their homology classes, and a classical problem in geometric analysis is whether a given homology class can be represented by a special Lagrangian.

However, very few constructions of special Lagrangians are known.
One method is to evolve a Lagrangian submanifold along the mean curvature flow, where the limit is a special Lagrangian if the flow converges.
Such machinery was first introduced by Smoczyk \cite{Smo00}.
In general, the flow may not converge and can develop finite time singularities \cite{Nev07}.
Thomas and Yau \cite{TY02} further studied the connection between stability and the mean curvature flow.
In particular, there exists at most one special Lagrangian representative in each Hamiltonian isotopy class.

From a modern point of view, the mean curvature flow is used to study the Harder--Narasimhan property of the Fukaya category as suggested by Joyce \cite{Joy15}.
Many stability conditions have been constructed on the derived category of coherent sheaves on projective varieties. However, little is known about stability conditions on Fukaya categories beyond dimension one. 
It is a folklore conjecture that the stability condition on a Fukaya category is related to the complex structures on the underlying manifold.
In particular, the stable objects would be given by special Lagrangians, and the Harder--Narasimhan property would imply the following statement at the homological level:

\begin{displayquote}
\it The homology class of a Lagrangian submanifold is a sum of classes represented by special Lagrangian submanifolds.
\end{displayquote}

In this paper, we prove the above statement for a dense subset of K\"ahler classes in the K\"ahler cone of an arbitrary K3 surface (not necessarily algebraic). Notice that the above statement is not always true: a counterexample can be found in \cite{Wol05}.

\begin{thm}[$\subseteq$ Theorem~\ref{thm:denseInKahlerCone}]
\label{thm:denseInKahlerCone_Intro}
For every K3 surface $X$, its K\"ahler cone $\cK_X$ contains a dense subset $\mathcal{S}_X$ such that every Lagrangian class decomposes as a sum of classes represented by special Lagrangian submanifolds with respect to a Ricci-flat K\"ahler form $\omega$ parametrized by $\mathcal{S}_X$.
Moreover, the set $\mathcal{S}_X$ contains all rational K\"ahler classes, so this property holds in particular for polarized K3 surfaces.
\end{thm}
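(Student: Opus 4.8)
The plan is to convert the statement into lattice theory by hyperk\"ahler rotation and then to solve the resulting lattice problem for a dense family of K\"ahler classes. Given a K\"ahler class with Ricci--flat representative $\omega$ (Yau) and holomorphic two--form $\Omega$, the classes $\omega,\mathrm{Re}\,\Omega,\mathrm{Im}\,\Omega$ span a positive three--plane in $H^2(X,\bR)$. First I would recall the standard dictionary: an integral class $\beta$ with $\beta\cdot\omega=0$ and $\beta\cdot\Omega\neq 0$ has a well--defined phase $\theta_\beta=\arg(\beta\cdot\Omega)$, and rotating the hyperk\"ahler triple so that $\mathrm{Re}(e^{-i\theta_\beta}\Omega)$ becomes the K\"ahler form makes $\beta$ a class of type $(1,1)$ for a new complex structure $J_\beta$, with positive pairing $|\beta\cdot\Omega|$ against the new K\"ahler class. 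A holomorphic curve for $J_\beta$ is then calibrated, hence a special Lagrangian of phase $\theta_\beta$ for $(X,\omega,\Omega)$.

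The second step is an effectivity input that does not use algebraicity. By Riemann--Roch on the (possibly non--algebraic) surface $(X,J_\beta)$, any class of type $(1,1)$ with $\beta^2\geq -2$ and positive intersection with the K\"ahler class has $h^0(\beta)>0$, hence is represented by an effective divisor, i.e.\ a holomorphic curve. Combined with the first step, every integral $\beta\in\omega^\perp$ with $\beta^2\geq -2$ and $\beta\cdot\Omega\neq 0$ is represented by a special Lagrangian. Therefore a Lagrangian class $\alpha$ decomposes as desired as soon as, inside the lattice $\Lambda_\omega:=\omega^\perp\cap H^2(X,\bZ)$, it can be written as a sum $\alpha=\sum_i\beta_i$ with each $\beta_i^2\geq -2$ and $\beta_i\cdot\Omega\neq 0$.

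The heart of the matter is the following lattice lemma: if $\Lambda_\omega$ contains a hyperbolic plane $U=\langle e,f\rangle$ (with $e^2=f^2=0$, $e\cdot f=1$) as an orthogonal summand, then every $v\in\Lambda_\omega$ is a sum of isotropic vectors. I would prove it by writing $\Lambda_\omega=U\oplus M$: the component in $U$ is a sum of the isotropic vectors $\pm e,\pm f$, while for $m\in M$ with $m^2=2k$ the vector $m+e-kf$ is isotropic, whence $m=(m+e-kf)+(-e)+(kf)$ is again a sum of isotropic vectors. Moreover any nonzero isotropic $v\in\Lambda_\omega$ satisfies $v\cdot\Omega\neq 0$: otherwise $v$ would lie in $\NS(X)\cap\omega^\perp$, which is negative definite by the Hodge index theorem and hence contains no nonzero isotropic vector. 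Thus the isotropic summands all have positive area and, by the first two steps, are represented by special Lagrangian tori.

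It remains to produce a dense set $\mathcal{S}_X$ of K\"ahler classes with $U\subseteq\Lambda_\omega$. For a rational class I may take $\omega$ primitive with $\omega^2=2n>0$; since $H^2(X,\bZ)\cong U^{3}\oplus E_8(-1)^{2}$ contains $U^{2}$, Eichler's criterion makes the orthogonal group act transitively on primitive vectors of fixed square, so I may assume $\omega=e+nf$ in one hyperbolic plane, giving $\Lambda_\omega\cong\langle -2n\rangle\oplus U^{2}\oplus E_8(-1)^{2}$, which contains $U$. Hence every rational K\"ahler class lies in $\mathcal{S}_X$; as these are dense in $\cK_X$, so is $\mathcal{S}_X$, and the polarized case is included. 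I expect the main obstacle to be precisely this lattice bookkeeping coupled with the positive--area analysis of the third step: one must ensure not merely that $\alpha\in\Lambda_\omega$ but that it splits into isotropic classes lying off $\NS(X)$, and verify that $U$ is an orthogonal summand of $\Lambda_\omega$ for the chosen $\omega$. By contrast, the hyperk\"ahler rotation and the Riemann--Roch effectivity are standard.
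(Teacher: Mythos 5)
Your steps (1)--(3) are essentially sound: the hyperk\"ahler rotation making an integral class with $\beta\cdot\omega=0$, $\beta\cdot\Omega\neq 0$ into an effective $(1,1)$-class, the Riemann--Roch effectivity argument (valid on non-algebraic K3s), the splitting of any vector into isotropic vectors when $\Lambda_\omega$ has a hyperbolic-plane orthogonal summand, and the observation that a nonzero isotropic vector in $\omega^\perp\cap H^2(X,\bZ)$ cannot pair to zero with $\Omega$ (since $\NS(X)\cap\omega^\perp$ is negative definite) are all correct, and the last point parallels Lemma~\ref{lemma:lag(1,1)} in the paper. The fatal problem is the final density step. Your set $\mathcal{S}_X$ is, in effect, the set of rational K\"ahler classes, and the claim \emph{``as these are dense in $\cK_X$, so is $\mathcal{S}_X$''} is false. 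The theorem is asserted for \emph{every} K3 surface: a non-algebraic K3 surface has no rational K\"ahler class at all (admitting one is equivalent to being algebraic), so for such $X$ your $\mathcal{S}_X$ is empty. Even when $X$ is algebraic with Picard number $\rho<20$, every rational class in $H^{1,1}(X,\bR)$ lies in $\NS(X)_\bQ$, so the rational K\"ahler classes are confined to the proper closed subset $\NS(X)_\bR\cap\cK_X$ of dimension $\rho<20=\dim_\bR\cK_X$ and are therefore not dense.

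The paper gets density by weakening the lattice hypothesis and then sweeping the K\"ahler cone with hyperplanes. Instead of demanding a hyperbolic summand $U\subseteq\Lambda_\omega$ (a codimension-two condition on $[\omega]$), it only requires a \emph{single} integral vector $v\in[\omega]^\perp$ with $v^2>0$: then any $\gamma\in\Lag(X,\omega)$ splits as $\gamma=mv+(\gamma-mv)$ with both summands of positive square for $m\gg 0$ (Lemma~\ref{lemma:positiveOrNot}), and each Lagrangian class of square $\geq -2$ is a special Lagrangian class by the rotation-plus-effectivity argument (Proposition~\ref{prop:LagToSpLags-2}, Theorem~\ref{thm:LagToSpLag}). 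The corresponding set of K\"ahler classes is the countable union of hyperplane sections $v^\perp\cap\cK_X$ over integral $v$ with $v^2>0$ --- consisting mostly of \emph{irrational} K\"ahler classes --- and its density is proved in Proposition~\ref{prop:denseInPosCone} via Lemma~\ref{lemma:denseHyperSec}: one projects the rational vectors of positive square orthogonally into $H^{1,1}(X,\bR)$, checks the image is dense, and shows that the union of the orthogonal hyperplanes of a dense family of vectors in $\cC^c\setminus\{0\}$ is dense in the positive cone. Your Eichler computation is then used in the paper only for the secondary claim that rational K\"ahler classes (when they exist) belong to $\mathcal{S}_X$. If you insisted on keeping the $U$-summand requirement, you would instead have to prove density of a countable union of codimension-two linear sections (as the paper does for its special Lagrangian fibration result, Lemma~\ref{lemma:denseCodimTwoSec}), which is both harder and unnecessary for the decomposition statement.
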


Another relevant aspect related to special Lagrangians comes from the Strominger--Yau--Zaslow conjecture \cite{SYZ96}. It predicts that a Calabi--Yau manifold $X$ near large complex structure limit admits a special Lagrangian torus fibration. 
In the case of K3 surfaces this is already known, and the proof relies on a standard trick involving hyperk\"ahler rotation \cite{GW00} (see also \cite{OO21}*{Section~4.4}). Our contribution is to study the distribution of the K\"ahler classes in the K\"ahler cone of a K3 surface $X$ with respect to which $X$ admits a special Lagrangian fibration.


\begin{thm}[$\subseteq$ Theorem~\ref{thm:SYZ}]
For every K3 surface $X$, its K\"ahler cone $\cK_X$ contains a dense subset $\mathcal{T}_X$ such that every Ricci-flat K\"ahler form $\omega$ parametrized by $\mathcal{T}_X$ admits a special Lagrangian fibration.
Moreover, the set $\mathcal{T}_X$ contains all rational K\"ahler classes, so this property holds in particular for polarized K3 surfaces.
\end{thm}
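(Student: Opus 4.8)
The plan is to reduce the existence of a special Lagrangian fibration to the existence of an \emph{elliptic fibration} on a hyperk\"ahler rotation of $X$, and then to produce the latter by lattice theory. Fix the complex structure $I$ on $X$ together with a nowhere-vanishing holomorphic $2$-form $\Omega$, and write $\Pi = \langle \mathrm{Re}\,\Omega, \mathrm{Im}\,\Omega\rangle \subset H^2(X,\bR)$ for the associated positive-definite $2$-plane, so that $H^{1,1}(X,\bR) = \Pi^{\perp}$ has signature $(1,19)$ and $\NS(X) = \Pi^{\perp}\cap H^2(X,\bZ)$. Given a K\"ahler class $\omega \in \cK_X$, Yau's theorem provides a Ricci-flat K\"ahler form in this class, and $(\omega,\mathrm{Re}\,\Omega,\mathrm{Im}\,\Omega)$, suitably normalized, spans a positive-definite $3$-plane $\langle\omega\rangle\oplus\Pi$. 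For a unit vector $\eta\in\Pi$, hyperk\"ahler rotation produces a new complex structure $I'$ on $X$ whose K\"ahler form is the direction of $\Pi$ orthogonal to $\eta$ and whose holomorphic $2$-form $\Omega'$ has real and imaginary parts spanning $\langle\omega,\eta\rangle$; thus $\NS(X,I')=\langle\omega,\eta\rangle^{\perp}\cap H^2(X,\bZ)$. The key observation is that a complex curve $C$ on $(X,I')$ satisfies $\Omega'|_C=0$, hence $\omega|_C=0$ and $\eta|_C=0$, so $C$ is special Lagrangian for $(\omega,\Omega)$ (calibrated by the phase of $\Omega$ perpendicular to $\eta$). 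Consequently an elliptic fibration on $(X,I')$ yields a special Lagrangian torus fibration on $(X,\omega)$.

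It then suffices to find, for a dense set of $\omega$, a primitive isotropic class $f\in H^2(X,\bZ)$ with $f\cdot\omega=0$ and $f\notin\NS(X)$: taking $\eta\in\Pi$ orthogonal to the nonzero projection of $f$ to $\Pi$ places $f$ in $\NS(X,I')$, and then $f^2=0$, $f\neq 0$ forces an elliptic fibration on $(X,I')$. I would obtain such an $f$ for every rational K\"ahler class and conclude by density of $\cK_X\cap H^2(X,\bQ)$ in $\cK_X$. Indeed, if $\omega\in\cK_X\cap H^2(X,\bQ)$ then $\omega^{\perp}\cap H^2(X,\bZ)$ is an even lattice of rank $21$ and signature $(2,19)$; being indefinite of rank $\geq 5$ it represents zero (Meyer), hence contains a primitive isotropic $f\neq 0$. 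Moreover $\omega$ lies in $\NS(X)\otimes\bQ$ and has positive square, so $\NS(X)\cap\omega^{\perp}$ is negative definite and contains no nonzero isotropic vector; therefore any such $f$ automatically satisfies $f\notin\NS(X)$, i.e.\ its projection to $\Pi$ is nonzero, which is exactly what the rotation requires. This simultaneously gives $\mathcal{T}_X\supseteq\cK_X\cap H^2(X,\bQ)$ and the density of $\mathcal{T}_X$.

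The remaining point, and the one I expect to be the main obstacle, is to promote $f\in\NS(X,I')$ to a genuine elliptic fibration. Here $(X,I')$ need not be projective --- if $\NS(X,I')$ has rank one it is generated by an isotropic class and $X$ carries no polarization --- so I cannot simply quote the projective theory. I would instead invoke the structure theory of possibly non-algebraic K3 surfaces: by Riemann--Roch $\pm f$ is effective, and acting by the Weyl group of reflections in $(-2)$-classes one may assume the primitive isotropic class is nef, so its linear system is base-point free and defines an elliptic fibration $X\to\bP^1$. Its general fibre is a smooth genus-one curve, which by the calibration computation is a smooth special Lagrangian $2$-torus for $(\omega,\Omega)$, the singular fibres accounting for the finitely many singular members. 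One must still check that hyperk\"ahler rotation yields a Ricci-flat K3 to which this complex-geometric input applies and that all normalizations match, but these are standard; the genuinely delicate step is the non-projective elliptic fibration existence together with verifying that the resulting fibration foliates $X$ by honest calibrated tori.
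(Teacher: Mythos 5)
Your reduction is the same as the paper's: hyperk\"ahler rotation turns an integral isotropic class orthogonal to $[\omega]$ into an isotropic $(1,1)$-class on the rotated surface; Riemann--Roch plus reflections in $(-2)$-curves make it nef and base-point free, giving an elliptic fibration; rotating back yields a special Lagrangian torus fibration. Your treatment of rational K\"ahler classes (Meyer's theorem applied to the even lattice $[\omega]^\perp\cap H^2(X,\bZ)$ of signature $(2,19)$, plus the Hodge-index observation that the isotropic vector cannot lie in $\NS(X)$) is a perfectly good alternative to the paper's appeal to Eichler's criterion. Also, the step you flag as the ``genuinely delicate'' one --- existence of elliptic fibrations on possibly non-projective K3 surfaces --- is standard and is handled no differently in the paper.

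The genuine gap is the density step. You conclude ``by density of $\cK_X\cap H^2(X,\bQ)$ in $\cK_X$,'' but rational K\"ahler classes are dense in $\cK_X$ only when $\rho(X)=20$: a rational class of type $(1,1)$ lies in $\NS(X)\otimes\bQ$, so whenever $\rho(X)<20$ the set $\cK_X\cap H^2(X,\bQ)$ is contained in the proper linear subspace $\NS(X)\otimes\bR\subsetneq H^{1,1}(X,\bR)$ and is nowhere dense in the open cone $\cK_X$; for a very general K3 one has $\NS(X)=0$ and there are no rational K\"ahler classes at all, so your $\mathcal{T}_X$ would be empty. Since the theorem is asserted for \emph{every} K3 surface, this is the heart of the matter, not a routine verification. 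The repair --- and the actual content of the paper's proof --- is to note that for a \emph{fixed} rational isotropic vector $\ell\in H^2(X,\bQ)$, of arbitrary Hodge type, the K\"ahler classes orthogonal to $\ell$ form a whole hyperplane section $\pi(\ell)^\perp\cap\cK_X$, where $\pi\colon H^2(X,\bR)\to H^{1,1}(X,\bR)$ is the orthogonal projection; one then proves that the union of these sections over all rational isotropic $\ell$ is dense in $\cK_X$. This is exactly Proposition~\ref{prop:denseInPosConeIso} combined with Lemma~\ref{lemma:denseHyperSec}: the projections $\pi(\ell)$ are dense in $\cC^c\setminus\{0\}$ (because $\pi$ restricted to the real isotropic quadric is surjective onto $H^{1,1}(X,\bR)$ and rational points are dense in that quadric), and the union of hyperplanes orthogonal to a dense family of vectors in $\cC^c\setminus\{0\}$ is dense in the positive cone. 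Nothing in your proposal supplies this argument; without it you have proved the statement only for K3 surfaces of Picard rank $20$. (For the paper's full Theorem~\ref{thm:SYZ}, which additionally requires $\mathcal{T}_X\subseteq\mathcal{S}_X$, one even needs the codimension-two refinement of Lemma~\ref{lemma:denseCodimTwoSec}, but for the version stated here the hyperplane density suffices.)
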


In the case of log Calabi--Yau $3$-folds, the above theorem in the algebraic case together with \cite{CJL21} imply the following result.

\begin{cor}[= Corollary~\ref{cor:3ToriInCY3}]
Let $Y$ be a Fano $3$-fold and $D\in |-K_Y|$ be a smooth anti-canonical divisor. Then the log Calabi--Yau $3$-fold $X=Y\backslash D$ contains infinitely many special Lagrangian $3$-tori.
\end{cor}

This paper is organized as follows. In Section~\ref{sect:lagK3}, we review some basic definitions and techniques which are necessary to our work.
We also provide a sufficient condition for a Lagrangian class to be decomposed into special Lagrangian classes.
In Section~\ref{sect:decompCriteria}, we give criteria for the decomposability in terms of lattice theory and then prove the density property as a consequence.
The SYZ conjecture for algebraic K3 surfaces is discussed in the end of the same section.

\subsection*{Acknowledgements}
The authors would like to thank Yu-Wei Fan, Fran\c cois Greer, Eyal Markman, Yuji Odaka, and Colleen Robles for helpful discussions. The second author is supported by the Simons Collaboration grant \#635846 and the NSF grant DMS \#2204109.

\section{Lagrangian classes on K3 surfaces}
\label{sect:lagK3}

In this section, we start with preliminaries about basic definitions and core machinery necessary to our work.
The remaining part is devoted to developing a sufficient condition for a Lagrangian class to be a sum of classes of special Lagrangian submanifolds.
Several parts of this section should be well-known to experts.

\subsection{Lagrangians on Calabi--Yau manifolds}
\label{subsect:defineLagrangian}

Let $X$ be a Calabi--Yau manifold of complex dimension $n$ and fix a Ricci-flat K\"ahler form $\omega$ on $X$.
An \emph{immersed Lagrangian submanifold} of $X$ with respect to $\omega$ is an immersion
\[
\xymatrix{
    \iota\colon L\ar[r] & X
}
\]
of a connected and oriented $n$-dimensional real manifold $L$ such that $\iota^*\omega=0$.
Throughout the paper, we may call such an immersion briefly a \emph{Lagrangian immersion}, or simply a \emph{Lagrangian}.
When there is no confusion, we may directly call $L$ a Lagrangian without specifying the immersion.

The K\"ahler form $\omega$ determines a Riemannian metric on $X$ and thus on $L$, which then induces a Riemannian volume form $dV_L$.
On the other hand, the covariantly constant holomorphic top form $\Omega$ on $X$ restricts as a nowhere vanishing top form $\iota^*\Omega$ on $L$.
So we can write
\[
    \iota^*\Omega = e^{i\phi}dV_L
\]
where $\phi\colon L\rightarrow \mathbb{R}/\mathbb{Z}$ is a differentiable function on $L$ called the \emph{phase}.
In the case that $\iota$ is an embedding and the phase $\phi$ is a constant function, we call $L$ a \emph{special Lagrangian submanifold}, or briefly a \emph{special Lagrangian}.
Note that this definition is independent of the choice of $\Omega$. It is well-known that special Lagrangians are volume minimizers among Lagrangians \cite{HL82}.

We call a homology class $\gamma\in H_n(X,\bZ)$ a \emph{Lagrangian class} if it can be represented by a Lagrangian immersion, and we call it a \emph{special Lagrangian class} if it is a sum of classes represented by special Lagrangians with positive coefficients.
Note that, under this definition, a representative of a special Lagrangian class may be a union of special Lagrangian submanifolds.
Using Poincar\'e duality, we consider a (special) Lagrangian class as a class in $H^n(X,\bZ)$ throughout the paper.

\subsection{The hyperk\"ahler rotation trick}
\label{subsect:hyperkahlerRotation}

A K\"{a}hler manifold $(X, \omega)$ of $\mbox{dim}_{\mathbb{C}}X=2n$ is \emph{hyperk\"{a}hler} if its holonomy group is in $\Sp(n)$. From the celebrated Calabi conjecture \cite{Y78}, every holomorphic symplectic manifold is hyperk\"ahler. 

From the $\Sp(n)$-holonomy, there exists a covariantly costant holomorphic symplectic $2$-form $\theta$ of norm $\sqrt{2}$ unique up to $S^1$-scaling.
A \emph{holomorphic Lagrangian} $L\subseteq X$ is a half-dimensional complex submanifold which satisfies $\theta|_L=0$. 
Given any
\[
    \zeta\in\{z\in\bC:|z|=1\}\cong S^1
\]
we denote by $X_{\zeta}$ the K\"ahler manifold with the same underlying space as $X$ and equipped with K\"ahler form and holomorphic symplectic $2$-form given~by
\begin{equation}
\label{eqn:HKrot}
\begin{aligned}
    \omega_{\zeta}&\colonequals\mathrm{Re}(\zeta\theta)\\
    \theta_{\zeta}&\colonequals\omega-i\mathrm{Im}(\zeta\theta).
\end{aligned}
\end{equation}
The following lemma is known as the hyperk\"ahler rotation trick.
It can be derived from \eqref{eqn:HKrot} and \cite{HL82}*{III~Corollary~1.11}.

\begin{lemma}
\label{lemma:holo-HK-Lag}
Let $X$ be a hyperk\"ahler manifold and let $L\subseteq X$ be a half-dimensional real submanifold.
\begin{enumerate}[label=\textup{(\arabic*)}]
    \item\label{holo-To-Lag}
    If $L$ is a holomorphic Lagrangian in $X$, then the same underlying space of $L$ defines a special Lagrangian in $X_{\zeta}$ for any $\zeta\in S^1$.
    \item\label{lag-To-Holo}
    If $L$ is a Lagrangian in $X$ such that $\mathrm{Im}(\zeta\theta)|_L=0$ for some $\zeta\in S^1$, then $L$ is a holomorphic Lagrangian in $X_{\zeta}$.
\end{enumerate}
In particular, if $\dim_{\mathbb{C}}X=2$, then $L\subseteq X$ is a holomorphic Lagrangian if and only if it is a special Lagrangian up to a hyperk\"ahler rotation.
\end{lemma}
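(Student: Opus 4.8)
The plan is to read off the equivalence from the two parts of the lemma, using that in complex dimension $2$ the defining equation of a holomorphic Lagrangian is automatic. Indeed, if $\dim_{\bC}X=2$ and $L\subseteq X$ is a half-dimensional complex submanifold, then $L$ is a complex curve, and since $\theta$ is of type $(2,0)$ while $L$ has complex dimension $1$, the restriction $\theta|_L$ vanishes identically. Thus ``holomorphic Lagrangian in $X$'' is the same as ``$I$-complex curve,'' and I read ``special Lagrangian up to a hyperk\"ahler rotation'' as: the underlying space of $L$ is a special Lagrangian in $X_\zeta$ for some $\zeta\in S^1$. With this dictionary the biconditional splits into the two implications below.

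For the direction ($\Rightarrow$) I would simply invoke part~\ref{holo-To-Lag} of Lemma~\ref{lemma:holo-HK-Lag}: if $L$ is a holomorphic Lagrangian in $X$, then the same underlying manifold is a special Lagrangian in $X_\zeta$ for every $\zeta$, hence in particular a special Lagrangian up to a hyperk\"ahler rotation. Concretely, since $\theta|_L=0$ one has $\omega_\zeta|_L=\mathrm{Re}(\zeta\theta)|_L=0$ and $\theta_\zeta|_L=\omega|_L$, and the latter is a positive multiple of the volume form because $L$ is an $I$-complex curve; so the resulting phase is the constant $0$, as required.

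For the direction ($\Leftarrow$), suppose $L$ is a special Lagrangian in $X_\zeta$ for some $\zeta$. Regarding $X_\zeta$ itself as a hyperk\"ahler manifold with K\"ahler form $\omega_\zeta$ and holomorphic symplectic form $\theta_\zeta$, the special Lagrangian hypothesis says exactly that $\omega_\zeta|_L=0$ and that $\theta_\zeta|_L$ has constant phase, i.e. $\mathrm{Im}(\xi\theta_\zeta)|_L=0$ for the unit $\xi\in S^1$ recording that phase. This is precisely the hypothesis of part~\ref{lag-To-Holo} applied to $X_\zeta$, and it yields that $L$ is a holomorphic Lagrangian in $(X_\zeta)_\xi$. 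It then remains to identify $(X_\zeta)_\xi$ with $X$. In the normalized case $\xi=1$ a direct computation from \eqref{eqn:HKrot} does the job: its K\"ahler form is $\mathrm{Re}(\theta_\zeta)=\omega$ and its holomorphic form is $\omega_\zeta-i\,\mathrm{Im}(\theta_\zeta)=\mathrm{Re}(\zeta\theta)+i\,\mathrm{Im}(\zeta\theta)=\zeta\theta$, which equals $\theta$ up to the $S^1$-scaling under which $\theta$ is only defined. Hence $(X_\zeta)_1=X$, and $L$ is a holomorphic Lagrangian in $X$.

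The hard part will be the bookkeeping in this last step, namely tracking how the two rotations in \eqref{eqn:HKrot} compose and confirming that the complex structure together with both real $2$-forms of $(X_\zeta)_\xi$ return to those of $X$. The genuine subtlety is the phase: the family $\{X_\zeta\}$ sweeps out a single great circle of complex structures in the twistor sphere, and a special Lagrangian with nonzero constant phase is carried by part~\ref{lag-To-Holo} into a member $(X_\zeta)_\xi$ whose complex structure lies off that circle, so it is an $I'$-complex curve for some $I'$ obtained from $I$ by a further rotation rather than for $I$ itself. For this reason I would state, and prove, the equivalence as one of submanifolds modulo the twistor family -- equivalently, absorbing the phase into the $S^1$-scaling of $\theta$ -- which is exactly what the phrase ``up to a hyperk\"ahler rotation'' is meant to encode, and which makes the two implications above match up cleanly.
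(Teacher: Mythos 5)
The paper does not actually write out a proof of this lemma: it notes just before the statement that both parts can be derived from the rotation formulas \eqref{eqn:HKrot} together with \cite{HL82}*{III~Corollary~1.11}, i.e.\ from Harvey--Lawson's calibration theory. Your proposal instead proves only the concluding ``in particular'' sentence, and it does so by quoting parts~\ref{holo-To-Lag} and~\ref{lag-To-Holo} of the very lemma being proved, so as a proof of the statement it is circular. The forward half is harmless: your computation there ($\theta|_L=0$ gives $\omega_\zeta|_L=\mathrm{Re}(\zeta\theta)|_L=0$ and $\iota^*\theta_\zeta=\iota^*\omega=dV_L$, granted Wirtinger's theorem) in fact amounts to a proof of part~\ref{holo-To-Lag} for surfaces. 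The genuine gap is part~\ref{lag-To-Holo}, on which your backward direction rests entirely and which you never address: from $\omega|_L=0$ and $\mathrm{Im}(\zeta\theta)|_L=0$ it is immediate that $\theta_\zeta|_L=0$, but one must show that $L$ is a \emph{complex submanifold} of $X_\zeta$, and this is exactly where Harvey--Lawson enters. For a surface the missing steps are: (i) on any Lagrangian one has the pointwise identity $\iota^*\theta=e^{i\phi}\,dV_L$, so the hypothesis $\mathrm{Im}(\zeta\theta)|_L=0$ forces $\zeta e^{i\phi}=\pm1$, i.e.\ $\omega_\zeta|_L=\mathrm{Re}(\zeta\theta)|_L=\pm dV_L$, meaning $L$ (up to orientation) is calibrated by $\omega_\zeta$; and (ii) the equality case of Wirtinger's inequality (\cite{HL82}*{III~Corollary~1.11}) then makes $L$ a holomorphic curve in $X_\zeta$. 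Without (i) and (ii), or some substitute, the backward implication has no proof.

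That said, your discussion of the phase is the most valuable part of the proposal, and it is essentially correct: the composition computation $(X_\zeta)_1=X$ (K\"ahler form $\mathrm{Re}(\theta_\zeta)=\omega$, holomorphic form $\omega_\zeta-i\,\mathrm{Im}(\theta_\zeta)=\zeta\theta$, which is $\theta$ up to $S^1$-scaling) is right, and so is the observation that a special Lagrangian in $X_\zeta$ of constant phase $\phi\notin\{0,\pi\}$ is holomorphic only for a complex structure off the circle $\{X_{\zeta'}\}$, hence is \emph{not} a holomorphic Lagrangian in $X$: writing $\theta=\omega_J+i\omega_K$, a $K$-holomorphic curve is a special Lagrangian of phase $-\pi/2$ in $X_1$ but is not $I$-holomorphic. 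So the ``in particular'' sentence does require the charitable reading you give it. However, your proposed repair --- absorbing the phase into the $S^1$-scaling of $\theta$ --- does not work: replacing $\theta$ by $\eta\theta$ merely reindexes the family, sending $X_\zeta$ to $X_{\zeta\eta}$ equipped with the \emph{same} form $\theta_{\zeta\eta}$ prescribed by \eqref{eqn:HKrot}, so the phase of $L$ inside any fixed member is unchanged. The repair consistent with how the paper uses the lemma is either to read the right-hand side of the equivalence as ``$L$ is calibrated by $\omega_\zeta$ for some $\zeta$'' (phase $0$ with respect to the canonical $\theta_\zeta$), or to split the equivalence asymmetrically: part~\ref{holo-To-Lag} gives that a holomorphic Lagrangian in $X$ is special Lagrangian in every $X_\zeta$, while part~\ref{lag-To-Holo} with $\zeta=e^{-i\phi}$ gives that a special Lagrangian in $X$ of phase $\phi$ is a holomorphic Lagrangian in $X_{e^{-i\phi}}$ --- which is the form in which the lemma is actually applied later (e.g.\ in Lemma~\ref{lemma:spLagK3}~\ref{spLag-2}).
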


\begin{lemma}
\label{lemma:spLagK3}
Let $X$ be a K3 surface equipped with a Ricci-flat K\"ahler form $\omega$ and a holomorphic $2$-form $\theta$.
\begin{enumerate}[label=\textup{(\alph*)}]
    \item\label{holo-To-Lag_zeta}
    If $C$ is a holomorphic curve in $X_\zeta$ for some $\zeta\in S^1$, then $C$ is a special Lagrangian in $X$.
    \item\label{spLag-2}
    If $\gamma\in H^2(X,\bZ)$ is represented by an irreducible special Lagrangian submanifold, then $\gamma^2\geq -2$.
\end{enumerate}
\end{lemma}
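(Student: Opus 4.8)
The plan is to deduce both parts from the hyperk\"ahler rotation trick of Lemma~\ref{lemma:holo-HK-Lag}, exploiting the fact that a hyperk\"ahler rotation fixes the underlying real manifold and hence the group $H^2(X,\bZ)$ together with its intersection form.

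For part~\ref{holo-To-Lag_zeta}, I would first observe that on a surface every holomorphic curve is automatically a holomorphic Lagrangian: since $\dim_{\bC}X_\zeta=2$, the holomorphic symplectic form $\theta_\zeta$ restricts to a $(2,0)$-form on the complex one-dimensional submanifold $C$ and therefore vanishes identically, so $\theta_\zeta|_C=0$ and $C$ is a holomorphic Lagrangian in $X_\zeta$. It then remains to recognize $X$ itself as a hyperk\"ahler rotation of $X_\zeta$. A direct computation from \eqref{eqn:HKrot} shows that rotating $X_\zeta$ back by the parameter $\zeta'=1$ returns the K\"ahler form $\omega$ and the holomorphic two-form $\zeta\theta$; since the special Lagrangian condition is insensitive to rescaling the holomorphic form by a unit complex number, this recovers $X$. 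Applying Lemma~\ref{lemma:holo-HK-Lag}\ref{holo-To-Lag} to the holomorphic Lagrangian $C$ in $X_\zeta$ then exhibits $C$ as a special Lagrangian in $X=(X_\zeta)_{1}$.

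For part~\ref{spLag-2}, I would travel in the opposite direction. Writing $\iota^*\theta=e^{i\phi}\,dV_L$ with $\phi$ constant for the special Lagrangian $L$, I set $\zeta=e^{-i\phi}\in S^1$, so that $\iota^*(\zeta\theta)=dV_L$ is real and hence $\mathrm{Im}(\zeta\theta)|_L=0$. Since $L$ is Lagrangian in $X$, Lemma~\ref{lemma:holo-HK-Lag}\ref{lag-To-Holo} upgrades $L$ to a holomorphic Lagrangian, that is, a holomorphic curve $C$, in the K3 surface $X_\zeta$; here $C$ is a smooth connected, hence irreducible, curve because the underlying set and its connectedness are unchanged by the rotation. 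The self-intersection $\gamma^2$ is a topological invariant of the cup product on $H^2(X,\bZ)$, so it is unaffected by the rotation and equals $C\cdot C$ computed on $X_\zeta$. Finally, adjunction on $X_\zeta$, whose canonical class is trivial, gives $C\cdot C=2p_a(C)-2\geq -2$ since the arithmetic genus of an irreducible curve is nonnegative.

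I expect the genuinely delicate points to be bookkeeping rather than conceptual: in part~\ref{holo-To-Lag_zeta}, confirming via \eqref{eqn:HKrot} that the inverse rotation is exactly $\zeta'=1$ and that the resulting replacement $\theta\mapsto\zeta\theta$ is harmless; and in part~\ref{spLag-2}, checking that the constant-phase definition of special Lagrangian translates precisely into the hypothesis $\mathrm{Im}(\zeta\theta)|_L=0$ of Lemma~\ref{lemma:holo-HK-Lag}\ref{lag-To-Holo}, and that adjunction remains available even though $X_\zeta$ may be only a non-algebraic complex K3 surface.
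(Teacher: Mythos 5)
Your proof is correct and follows essentially the same route as the paper: both parts are deduced from the hyperk\"ahler rotation trick of Lemma~\ref{lemma:holo-HK-Lag}, with part~\ref{holo-To-Lag_zeta} obtained by recognizing $X$ as the rotation $(X_\zeta)_{1}$ (up to replacing $\theta$ by $\zeta\theta$, which is harmless), and part~\ref{spLag-2} by rotating the special Lagrangian into an irreducible holomorphic curve and invoking the genus formula on a K3 surface. The paper simply cites \cite{BHPV04}*{VIII~Proposition~3.7~(ii)} for the inequality $C^2\geq-2$, which is exactly the adjunction argument you spelled out, valid also for non-algebraic K3 surfaces.
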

\begin{proof}
Part~\ref{holo-To-Lag_zeta} is a special case of Lemma~\ref{lemma:holo-HK-Lag}~\ref{holo-To-Lag}. It can also be derived straightforwardly from \eqref{eqn:HKrot}.
Part~\ref{spLag-2} follows from Lemma~\ref{lemma:holo-HK-Lag} and \cite{BHPV04}*{VIII~Proposition~3.7~(ii)}.
\end{proof}

\begin{rmk}
In higher dimension, there exist special Lagrangians which never become holomorphic Lagrangians after any hyperk\"ahler rotation. See \cite{Hat14} for such examples.
\end{rmk}

\subsection{Decomposing classes of type (1,1)}
\label{subsect:holoK3}

Here we prove a sufficient condition for classes of Hodge type $(1,1)$ to be decomposed into classes of smooth holomorphic curves on a K3 surface.
Later on we will combine it with the hyperk\"ahler rotation to develop a similar condition for Lagrangian classes.

\begin{lemma}
\label{lemma:effectiveClasses}
Let $X$ be a K3 surface (notice that there is no projectivity assumption on $X$) equipped with a holomorphic $2$-form $\theta$.
Assume that $\gamma\in H^2(X,\bZ)$ satisfies $\int_{\gamma}\theta=0$ and $\gamma^2\geq -2$.
Then $\gamma$ or $-\gamma$ can be written as
\[
    n_1[C_1] + ... + n_k[C_k],\quad n_i\geq 0,
\]
where each $C_i\subseteq X$ is a smooth holomorphic curve.
\end{lemma}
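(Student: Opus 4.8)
The plan is to split the argument into two stages: first establish that $\gamma$ or $-\gamma$ is effective, and then refine an effective representative into a sum of \emph{smooth} curves. For the effectivity stage I would use Riemann--Roch. The hypothesis $\int_\gamma\theta=0$ says exactly that $\gamma$ is of Hodge type $(1,1)$, and since $H^1(X,\cO_X)=0$ on a K3 surface the exponential sequence identifies the integral $(1,1)$-classes with $\Pic(X)$; hence $\gamma=c_1(L)$ for a unique line bundle $L$ (this is where I avoid any projectivity assumption). Riemann--Roch on a K3 surface gives $\chi(L)=\tfrac12\gamma^2+2\geq 1$ because $\gamma^2\geq -2$, and since $K_X=\cO_X$ Serre duality yields $h^2(L)=h^0(L^{-1})$, so that
\[
    h^0(L)+h^0(L^{-1})\;\geq\; h^0(L)-h^1(L)+h^2(L)\;=\;\chi(L)\;\geq\;1.
\]
Thus $L$ or $L^{-1}$ has a nonzero section; replacing $\gamma$ by $-\gamma$ if necessary, I may assume $\gamma$ is effective and fix an effective divisor $D$ with $[D]=\gamma$.

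For the second stage I would write $D=\sum_i n_i C_i$ with the $C_i$ distinct irreducible curves and $n_i>0$, and treat the components according to their self-intersection. By adjunction ($K_X=\cO_X$) every component satisfies $C_i^2=2p_a(C_i)-2\geq -2$. A component with $C_i^2=-2$ has $p_a(C_i)=0$, hence is a smooth rational curve $C_i\cong\bP^1$ and needs no modification. A component with $C_i^2\geq 0$ meets every other irreducible curve nonnegatively and has $C_i^2\geq 0$, so its class is nef, and $h^0(\cO_X(C_i))\geq\chi(\cO_X(C_i))=\tfrac12 C_i^2+2\geq 2$, so $|C_i|$ is positive-dimensional. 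The idea is then to replace such a $C_i$ by a general member of $|C_i|$, which lies in the same cohomology class but is smooth.

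The hard part will be the smoothness of these general members, and this is where I expect to lean on the structure theory of linear systems on K3 surfaces (Saint-Donat; see \cite{BHPV04}*{Ch.~VIII}) rather than a bare-hands argument. When $C_i^2=0$ the class is a multiple of a primitive isotropic nef class, $|C_i|$ is essentially an elliptic pencil, and a general fibre is a smooth elliptic curve. When $C_i^2>0$ the nef system $|C_i|$ is base-point free outside a short list of exceptional (hyperelliptic-type) configurations, so Bertini's theorem makes the general member smooth, and the exceptional cases are disposed of by a further decomposition into an elliptic pencil and $(-2)$-curves. I would also record that the genuinely non-projective situation is mild: if $\gamma^2>0$ then $\NS(X)$ contains a class of positive square and $X$ is automatically projective, so on a non-projective $X$ only $\gamma^2\in\{-2,0\}$ occur, where every irreducible component is already a smooth rational or smooth elliptic curve. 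Collecting the smooth rational components arising from $C_i^2=-2$ together with the smooth general members arising from $C_i^2\geq 0$ then expresses $\gamma$ as a nonnegative integral combination of classes of smooth holomorphic curves, as required.
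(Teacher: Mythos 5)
Your proposal is correct and takes essentially the same route as the paper's proof: effectivity of $\gamma$ or $-\gamma$ (you reprove, via Riemann--Roch and Serre duality, the statement the paper simply cites from BHPV), decomposition of an effective representative into irreducible curves sorted by self-intersection, the $(-2)$-curves already being smooth, and smoothing of the components with $C_i^2\geq 0$ by passing to general members of base-point-free linear systems via Saint-Donat and Bertini, with projectivity invoked exactly when a positive-square curve class is present (your worry about ``hyperelliptic-type'' exceptions is unnecessary, since Saint-Donat gives base-point freeness whenever there is no fixed component, and an irreducible $C_i$ with $C_i^2>0$ and $\dim|C_i|\geq 1$ can have none). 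The only slip is your closing remark that on a non-projective $X$ every irreducible component ``is already'' a smooth rational or smooth elliptic curve: an irreducible curve of square $0$ has arithmetic genus $1$ and can be nodal or cuspidal --- every elliptic fibration on a K3 surface, projective or not, has singular fibers, since the Euler numbers of the fibers must sum to $24$ --- but this is harmless, because your main argument replaces any such component by a smooth general member of the pencil $|C_i|$ anyway.
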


\begin{proof}
The condition $\int_{\gamma}\theta=0$ means that
\[
    \gamma\in H^{1,1}(X,\bC)\cap H^2(X,\bZ).
\]
By \cite{BHPV04}*{VIII~Propositions~3.7~(i)}, either $\gamma$ or $-\gamma$ is effective.
Assume that $\gamma$ is effective.
As a consequence of \cite{BHPV04}*{VIII~Propositions~3.8}, we can decompose $\gamma$ into classes of irreducible curves as
\[
    \gamma = \sum_in_i[C_i]+ \sum_jm_j[E_j] + \sum_k\ell_k[D_k],
\]
where $n_i$, $m_j$, $\ell$ are non-negative integers, $C_i^2=-2$, $E_j^2=0$, and $D_k^2>0$.
The curves $C_i$ are $(-2)$-curves and thus smooth.
The linear system $|E_j|$ has no base-point, so we can apply Bertini's theorem \cite{GH94}*{p.137} to choose a smooth irreducible representative of the class of $E_j$.

Suppose that $\ell_k\neq0$ for some $k$.
In this case, $X$ is algebraic by \cite{BHPV04}*{IV~Theorem~6.2}.
Evidently, $D_k$ has no fix component, so we can apply \cite{Sai74}*{Corollary~3.2} to conclude that $|D_k|$ has no base-point.
Again by Bertini's theorem we can choose a smooth irreducible representative of the class of $D_k$.
This proves our claim for $\gamma$.
In the case that $-\gamma$ is effective, an analogous argument yields our claim for $-\gamma$ instead.
\end{proof}

\begin{lemma}
\label{lemma:lag(1,1)}
Let $X$ be a K3 surface equipped with a Ricci-flat K\"ahler form $\omega$ and a holomorphic $2$-form $\theta$.
Assume that $\gamma\in H^2(X,\bZ)$ is a Lagrangian class which satisfies $\int_\gamma\theta=0$ and $\gamma^2\geq-2$. Then $\gamma=0$.
\end{lemma}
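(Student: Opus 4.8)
The plan is to combine the defining property of a Lagrangian with the effectivity statement already established in Lemma~\ref{lemma:effectiveClasses}. First I would translate the two hypotheses into concrete pairings. Since $\gamma$ is represented by a Lagrangian immersion $\iota\colon L\to X$ with $\iota^*\omega=0$, integrating the K\"ahler form over a representative gives
\[
    \int_\gamma\omega=\int_L\iota^*\omega=0,
\]
so $\gamma$ pairs trivially with the K\"ahler class. Meanwhile the condition $\int_\gamma\theta=0$ says precisely that $\gamma\in H^{1,1}(X,\bC)\cap H^2(X,\bZ)$. (As a sanity check, these two conditions make $\gamma$ orthogonal to the positive-definite $3$-plane spanned by $\mathrm{Re}(\theta)$, $\mathrm{Im}(\theta)$, and $\omega$, which already forces $\gamma^2\le 0$; but the cleaner route below uses effectivity directly.)

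Next I would feed $\gamma$ into Lemma~\ref{lemma:effectiveClasses}: its hypotheses $\int_\gamma\theta=0$ and $\gamma^2\ge -2$ are exactly what is assumed here, so either $\gamma$ or $-\gamma$ can be written as $\sum_i n_i[C_i]$ with $n_i\ge 0$ and each $C_i$ a smooth holomorphic curve. Replacing $\gamma$ by $-\gamma$ if necessary, I may assume $\gamma=\sum_i n_i[C_i]$; note that the sign flip is harmless since the conclusion $\gamma=0$ is symmetric in $\pm\gamma$.

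The final step is to pair this effective expression against $\omega$. Because the K\"ahler form restricts to a volume form on every holomorphic curve, each $C_i$ has strictly positive area $\int_{C_i}\omega>0$, whence
\[
    \int_\gamma\omega=\sum_i n_i\int_{C_i}\omega\ge 0,
\]
with equality if and only if every $n_i$ vanishes, i.e.\ $\gamma=0$. Comparing with the Lagrangian relation $\int_\gamma\omega=0$ from the first paragraph forces all $n_i=0$, and therefore $\gamma=0$. I expect no genuine obstacle here once Lemma~\ref{lemma:effectiveClasses} is in hand; the only points demanding a little care are the \emph{strict} positivity $\int_{C_i}\omega>0$ (which is where Ricci-flatness, or rather just K\"ahlerness of $\omega$, enters) and the bookkeeping of the $\pm\gamma$ alternative.
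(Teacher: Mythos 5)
Your proposal is correct and follows essentially the same route as the paper's own proof: apply Lemma~\ref{lemma:effectiveClasses} to write the class as a non-negative combination of holomorphic curve classes, then pair with $[\omega]$ and use strict positivity of $\int_{C_i}\omega$ to force all coefficients to vanish. Your explicit handling of the $\pm\gamma$ alternative is slightly more careful than the paper, which absorbs that case silently, but the argument is the same.
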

\begin{proof}
By Lemma~\ref{lemma:effectiveClasses}, we can write $\gamma = \sum_{i=1}^kn_i[C_i]$ where $n_i\geq 0$ for all $i$ and each $C_i\subseteq X$ is a holomorphic curve.
As $[C_i]\cdot[\omega]>0$ for all $i$, the condition $\gamma\cdot[\omega]=0$ implies that $n_i=0$ for all $i$ and thus $\gamma=0$.
\end{proof}

\subsection{Decomposing Lagrangian classes}
\label{subsect:sufficientCondition}

Here we introduce the numerical condition for a Lagrangian class to be decomposed into classes of special Lagrangian submanifolds.
Before showing the sufficiency of the condition,
let us prove a lemma.

\begin{lemma}
\label{lemma:producingClassType(1,1)}
Let $X$ be a K3 surface equipped with a Ricci-flat K\"ahler form $\omega$ and a holomorphic $2$-form $\theta$.
Suppose that $\gamma\in H^2(X,\bZ)$ is a Lagrangian class such that $c\colonequals\int_{\gamma}\theta\neq0$.
Then $\int_{\gamma}\theta_\zeta = 0$ if and only if $\zeta^2=\bar{c}/c$.
\end{lemma}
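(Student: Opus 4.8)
The plan is to compute $\int_\gamma\theta_\zeta$ directly from the defining formula \eqref{eqn:HKrot} and read off when it vanishes. The only structural input needed is that $\gamma$ is a \emph{Lagrangian} class: if $\iota\colon L\to X$ represents $\gamma$ with $\iota^*\omega=0$, then $\int_\gamma\omega=\int_L\iota^*\omega=0$. This is the sole place where the Lagrangian hypothesis enters.

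First I would substitute $\theta_\zeta=\omega-i\,\mathrm{Im}(\zeta\theta)$ and use $\mathbb{R}$-linearity of integration over the real cycle $\gamma$ to obtain
\[
\int_\gamma\theta_\zeta=\int_\gamma\omega-i\int_\gamma\mathrm{Im}(\zeta\theta)=-i\int_\gamma\mathrm{Im}(\zeta\theta),
\]
the second equality by the Lagrangian condition. Next, writing $\mathrm{Im}(\zeta\theta)=\tfrac{1}{2i}(\zeta\theta-\bar\zeta\bar\theta)$ and using that $\gamma$ is a real cycle, so that $\int_\gamma\bar\theta=\overline{\int_\gamma\theta}=\bar c$, I would get
\[
\int_\gamma\mathrm{Im}(\zeta\theta)=\frac{1}{2i}\left(\zeta c-\bar\zeta\bar c\right)=\mathrm{Im}(\zeta c).
\]
Hence $\int_\gamma\theta_\zeta=-i\,\mathrm{Im}(\zeta c)$, which vanishes precisely when $\zeta c\in\mathbb{R}$.

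Finally I would translate the condition $\zeta c\in\mathbb{R}$ into the stated form. Since $c\neq0$, the reality of $\zeta c$ is equivalent to $\zeta c=\overline{\zeta c}=\bar\zeta\bar c$, that is $\zeta/\bar\zeta=\bar c/c$; and because $|\zeta|=1$ we have $\bar\zeta=\zeta^{-1}$, so $\zeta/\bar\zeta=\zeta^2$. This yields $\zeta^2=\bar c/c$, as claimed, and every step is reversible so the equivalence holds in both directions.

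I do not expect a genuine obstacle here: the argument is a short linear computation. The only points demanding care are invoking $\int_\gamma\omega=0$ (the single use of the Lagrangian assumption) and handling the complex-valued form $\theta$ correctly when commuting integration with $\mathrm{Im}$ — in particular the identity $\int_\gamma\bar\theta=\bar c$, which relies on $\gamma$ being represented by a real cycle. The hypothesis $c\neq0$ is used only to guarantee that $\bar c/c$ is well defined.
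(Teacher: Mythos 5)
Your proposal is correct and follows essentially the same route as the paper's proof: use $\int_\gamma\omega=0$ to reduce to $\int_\gamma\mathrm{Im}(\zeta\theta)$, expand via $\mathrm{Im}(\zeta\theta)=\tfrac{1}{2i}(\zeta\theta-\overline{\zeta\theta})$ together with $\int_\gamma\bar\theta=\bar c$ and $\bar\zeta=\zeta^{-1}$, and solve $\zeta c-\zeta^{-1}\bar c=0$. Your version is slightly more explicit about why $\int_\gamma\bar\theta=\bar c$ (reality of the cycle $\gamma$), which the paper uses tacitly, but the arguments are the same.
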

\begin{proof}
Using the fact that $\int_\gamma\omega=0$, we obtain
\begin{gather*}
    0
    =\int_\gamma\theta_\zeta
    = \int_\gamma(\omega-i\mathrm{Im}(\zeta\theta))
    = -i\int_\gamma\mathrm{Im}(\zeta\theta)\\
    = -\frac{1}{2}\int_\gamma(\zeta\theta-\overline{\zeta\theta})
    = -\frac{1}{2}\left(
    \zeta\int_\gamma\theta
    - \frac{1}{\zeta}\int_\gamma\bar{\theta}
    \right)
    = -\frac{1}{2}\left(
    \zeta c - \frac{1}{\zeta}\bar{c}
    \right).
\end{gather*}
This is equivalent to
\[
    \zeta c - \frac{1}{\zeta}\bar{c} = 0,
\]
which holds if and only if $\zeta^2 = \bar{c}/c$.
\end{proof}

\begin{prop}
\label{prop:LagToSpLags-2}
Let $X$ be a K3 surface equipped with a Ricci-flat K\"ahler form $\omega$.
If $\gamma\in H^2(X,\mathbb{Z})$ is a Lagrangian class satisfying $\gamma^2\geq -2$,
then
\[
    \gamma = \sum_{i=1}^k \alpha_i,
\]
where each $\alpha_i$ is represented by a special Lagrangian submanifold of the same phase. In other words, $\gamma$ is a special Lagrangian class.
\end{prop}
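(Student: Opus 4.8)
The plan is to trade the Lagrangian condition on $X$ for the easier task of decomposing a type $(1,1)$ class on a single hyperk\"ahler rotation $X_\zeta$, and then to pull the resulting holomorphic curves back to $X$ as special Lagrangians. Since $\gamma$ is a Lagrangian class we have $\int_\gamma\omega=0$, so set $c\colonequals\int_\gamma\theta$ for a holomorphic $2$-form $\theta$ compatible with $\omega$ via \eqref{eqn:HKrot}. If $c=0$, then $\int_\gamma\theta=0$ together with $\gamma^2\geq-2$ puts us directly in the hypothesis of Lemma~\ref{lemma:lag(1,1)}, which forces $\gamma=0$; this is the trivial (empty) special Lagrangian class, so I may assume $c\neq0$ from now on.

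Assuming $c\neq 0$, I would first produce a rotation in which $\gamma$ becomes algebraic. By Lemma~\ref{lemma:producingClassType(1,1)} there is $\zeta\in S^1$ with $\zeta^2=\bar c/c$ such that $\int_\gamma\theta_\zeta=0$; that is, $\gamma$ is of Hodge type $(1,1)$ on the K3 surface $X_\zeta$, whose holomorphic $2$-form is $\theta_\zeta$. The self-intersection $\gamma^2\geq -2$ is purely topological and hence unchanged. Applying Lemma~\ref{lemma:effectiveClasses} on $X_\zeta$, I obtain that $\gamma$ or $-\gamma$ equals $\sum_{i=1}^k n_i[C_i]$ with $n_i\geq 0$ and each $C_i\subseteq X_\zeta$ a smooth holomorphic curve. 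Each such $C_i$ is a special Lagrangian submanifold of $X$ by Lemma~\ref{lemma:spLagK3}\ref{holo-To-Lag_zeta}, so grouping the terms gives a decomposition of $\gamma$ (or $-\gamma$) into special Lagrangian classes. The sign ambiguity is harmless: if it is $-\gamma$ that is effective, I reverse the orientation of each $C_i$, which keeps it special Lagrangian and only shifts every phase by the same constant $1/2\in\bR/\bZ$, giving $\gamma=\sum_i n_i(-[C_i])$.

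The step that requires genuine care---and the crux of the statement---is showing that all the $\alpha_i$ share a single phase. This is exactly why it is important that one and the same $\zeta$ works for the whole of $\gamma$. From \eqref{eqn:HKrot} one has $\zeta\theta=\omega_\zeta-i\,\mathrm{Im}(\theta_\zeta)$; restricting to a holomorphic curve $C_i\subseteq X_\zeta$, the Wirtinger equality gives $\omega_\zeta|_{C_i}=dV_{C_i}$ while $\theta_\zeta|_{C_i}=0$ (a complex curve in a surface is automatically holomorphic Lagrangian), so that $\theta|_{C_i}=\bar\zeta\,dV_{C_i}$. Hence every $C_i$ has the constant phase $-\arg\zeta$ as a special Lagrangian in $X$, independent of $i$; the orientation reversal in the sign case shifts this common value uniformly. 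I therefore expect the write-up to spend most of its effort on this phase computation and on carefully invoking Lemma~\ref{lemma:holo-HK-Lag} to justify that the curves $C_i$ remain special Lagrangian of the asserted phase after the rotation, the decomposition itself being an immediate consequence of Lemmas~\ref{lemma:producingClassType(1,1)} and~\ref{lemma:effectiveClasses}.
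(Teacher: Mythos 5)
Your proposal is correct and follows essentially the same route as the paper: both reduce via Lemma~\ref{lemma:lag(1,1)} when $\int_\gamma\theta=0$, and otherwise use Lemma~\ref{lemma:producingClassType(1,1)} to pick $\zeta$ with $\zeta^2=\bar{c}/c$, decompose via Lemma~\ref{lemma:effectiveClasses} on $X_\zeta$, and rotate back with Lemma~\ref{lemma:spLagK3}~\ref{holo-To-Lag_zeta}. Your treatment of the sign ambiguity by orientation reversal (uniform phase shift) is equivalent to the paper's observation that the classes $-\alpha_i$ are holomorphic on $X_{-\zeta}$, and your explicit Wirtinger computation of the common phase $-\arg\zeta$ merely makes explicit what the paper leaves implicit in the hyperk\"ahler rotation lemma.
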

\begin{proof}
If $\int_\gamma\theta=0$, then the conclusion follows from Lemma~\ref{lemma:lag(1,1)}.
Assume that $\int_\gamma\theta\neq 0$.
Then there exists $\zeta$ such that $\int_\gamma\theta_\zeta = 0$ by Lemma~\ref{lemma:producingClassType(1,1)}.
According to Lemma~\ref{lemma:effectiveClasses}, either $\gamma$ or $-\gamma$ can be written as a sum
\begin{equation}
\label{gammaor-gammaequaltosum}
\sum_{i=1}^k\alpha_i,
\end{equation}
where each $\alpha_i$ is represented by a smooth holomorphic curves on $X_{\zeta}$.
Here we allow the terms in the sum to occur multiple times.
Then by Lemma~\ref{lemma:spLagK3}~\ref{holo-To-Lag_zeta}, each $\alpha_i$ is represented by a special Lagrangian in $X$. Therefore, if $\gamma$ is equal to the sum in \eqref{gammaor-gammaequaltosum}, then we are done. Otherwise, if $-\gamma$ is equal to \eqref{gammaor-gammaequaltosum}, then write
\[
    \gamma=\sum_{i=1}^k(-\alpha_i)
\]
and we have that the $-\alpha_i$ are holomorphic for $X_{-\zeta}$.
Applying Lemma~\ref{lemma:spLagK3}~\ref{holo-To-Lag_zeta} again, the classes $-\alpha_i$ are represented by special Lagrangians.
\end{proof}

\section{Criteria for the decomposability}
\label{sect:decompCriteria}

In this section, we give criteria which determine whether a Lagrangian class on a K\"ahler K3 surface is special, i.e. a sum of classes represented by special Lagrangian submanifolds, or not.
The criteria are developed upon the lattice structure of the second cohomology group.
They allow us to describe the locus in the moduli space about the K\"ahler K3 surfaces on which every Lagrangian class is special.
In the end of the section, we discuss the existence of a special Lagrangian fibration on algebraic K3 surfaces.

\subsection{Sublattice of Lagrangian classes}
\label{subsect:lagSublat}

Let $X$ be a K3 surface. As a reminder, recall that the cohomology group $H^2(X,\bZ)$ equipped with the intersection product is isometric to the lattice
\[
    \Lambda_{\Kthree}\colonequals
    U^{\oplus 3}\oplus E_8^{\oplus 2},
\]
where $U$ is the hyperbolic plane and $E_8$ is the unique negative definite unimodular even lattice of rank $8$.

Now equip $X$ with a Ricci-flat K\"ahler form $\omega$ and covariantly constant holomorphic volume form $\theta$.
We denote by $[\omega]^{\perp}$ the orthogonal complement of $[\omega]$ in
\[
    H^2(X,\bR) = H^2(X,\bZ)\otimes_{\bZ}\bR.
\]
If a class $\gamma\in H^2(X,\bZ)$ is represented by a Lagrangian immersion $\iota\colon L\to X$, then the relation $\iota^*\omega=0$ implies that
\[
    \gamma\cdot[\omega]
    = \int_{L}\iota^*\omega
    = 0.
\]
The converse is true by \cite{SW01}*{Corollary~2.4}.
As a consequence, a class $\gamma\in H^2(X,\bZ)$ is Lagrangian if and only if $\gamma$ belongs to the sublattice
\begin{equation}
\label{eqn:Lag(KahlerK3)}
    \Lag(X,\omega)
    \colonequals
    [\omega]^\perp\cap H^2(X,\bZ),
\end{equation}
which we call a \emph{Lagrangian lattice}.
Due to Yau's celebrated theorem \cite{Y78}, there is a unique Ricci-flat metric in each K\"ahler class. We will no longer distinguish a K\"ahler class and a Ricci-flat K\"ahler form later on.
We further define $\SLag(X,\omega)$ as the sublattice of $\Lag(X,\omega)$ generated by the classes of special Lagrangian submanifolds. 
In other words, $\SLag(X,\omega)$ consists of special Lagrangian classes.

Our main question is about whether $\Lag(X,\omega)$ and $\SLag(X,\omega)$ coincide or not.
First of all, we have the following numerical characterization for $\SLag(X,\omega)$.

\begin{prop}
\label{prop:numericalSpLag}
$
    \SLag(X,\omega)=\langle
        \delta\in\Lag(X,\omega):
        \delta^2\geq-2
    \rangle.
$
\end{prop}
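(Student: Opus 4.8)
The plan is to prove the equality of two lattices by showing mutual containment. Write $M \colonequals \langle \delta \in \Lag(X,\omega) : \delta^2 \geq -2 \rangle$ for the lattice on the right-hand side. The statement then reads $\SLag(X,\omega) = M$.

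\medskip

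\noindent\textbf{The containment $M \subseteq \SLag(X,\omega)$.}
First I would observe that $M$ is generated, by definition, by Lagrangian classes $\delta$ with $\delta^2 \geq -2$. It therefore suffices to show each such generator $\delta$ lies in $\SLag(X,\omega)$. But this is exactly the content of Proposition~\ref{prop:LagToSpLags-2}: any Lagrangian class $\gamma$ with $\gamma^2 \geq -2$ is a special Lagrangian class, i.e.\ a sum of classes represented by special Lagrangian submanifolds, hence lies in $\SLag(X,\omega)$ by the definition of the latter as the sublattice generated by such classes. Since $\SLag(X,\omega)$ is a sublattice and contains every generator of $M$, it contains $M$. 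This direction is essentially immediate given the earlier proposition.

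\medskip

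\noindent\textbf{The containment $\SLag(X,\omega) \subseteq M$.}
For the reverse inclusion, recall that $\SLag(X,\omega)$ is by definition generated by classes $\alpha$ represented by special Lagrangian submanifolds. It suffices to show each such $\alpha$ lies in $M$. Decompose $\alpha$ as a sum of classes of \emph{irreducible} special Lagrangian submanifolds; since $M$ is closed under addition, it is enough to treat an irreducible special Lagrangian class. By Lemma~\ref{lemma:spLagK3}~\ref{spLag-2}, an irreducible special Lagrangian class $\beta$ satisfies $\beta^2 \geq -2$, and being represented by a (special) Lagrangian it is a Lagrangian class, i.e.\ $\beta \in \Lag(X,\omega)$. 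Thus $\beta$ is one of the generators of $M$, so $\beta \in M$, and summing gives $\alpha \in M$. Hence $\SLag(X,\omega) \subseteq M$.

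\medskip

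\noindent\textbf{Remarks on the main obstacle.}
The two inclusions package the two key earlier results symmetrically: Proposition~\ref{prop:LagToSpLags-2} drives one direction and Lemma~\ref{lemma:spLagK3}~\ref{spLag-2} the other, so no genuinely new geometry is needed. The only point requiring a little care — and the step I would flag as the subtle one — is the reduction to \emph{irreducible} special Lagrangians in the second containment, since Lemma~\ref{lemma:spLagK3}~\ref{spLag-2} gives the bound $\beta^2 \geq -2$ only for irreducible representatives. I would therefore make explicit that a special Lagrangian submanifold representing a generator of $\SLag(X,\omega)$ may a priori be disconnected or reducible, and that one passes to its irreducible components (each of which is again special Lagrangian of the same phase, hence contributes a class of self-intersection $\geq -2$) before invoking the lemma. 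Once this reduction is stated, both inclusions follow formally and the proof closes.
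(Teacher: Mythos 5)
Your proof is correct and takes exactly the same route as the paper: the containment $M\subseteq\SLag(X,\omega)$ via Proposition~\ref{prop:LagToSpLags-2}, and the reverse containment via Lemma~\ref{lemma:spLagK3}~\ref{spLag-2}. Your cautionary reduction to irreducible components is harmless but not needed, since under the paper's definitions a special Lagrangian submanifold is connected and embedded, hence already ``irreducible'' in the sense of the lemma.
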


\begin{proof}
The containment $(\supseteq)$ follows from Proposition~\ref{prop:LagToSpLags-2}. The other containment follows from Lemma~\ref{lemma:spLagK3}~\ref{spLag-2}.
\end{proof}

\subsection{Characterization of Lagrangian lattices}
\label{subsect:charLagSublat}

Which sublattice $E\subseteq\Lambda_{\Kthree}$ appears as a Lagrangian lattice?
Suppose that $E=\Lag(X,\omega)$ for some K\"ahler K3 surface $(X,\omega)$. Then clearly $E$ is proper and saturated (by proper we mean properly contained in $\Lambda_{\Kthree}$, and it could possibly be zero). Moreover, the subspace $(E^\perp)_\bR\subseteq(\Lambda_{\Kthree})_\bR$ contains a vector with positive self-intersection, namely, the K\"ahler class $[\omega]$, which implies that $E^\perp$ itself contains a vector with positive self-intersection.
In the following, we will show that these conditions are sufficient for $E$ to be a Lagrangian sublattice.

\begin{lemma}
\label{lemma:realizableOrthComp}
Let $\Lambda$ be a non-degenerate lattice and let $E\subsetneq\Lambda$ be a proper saturated sublattice. Suppose that the orthogonal complement $E^\perp\subseteq\Lambda$ contains a vector with positive self-intersection.
Then there exists $v\in\Lambda_\bR\colonequals\Lambda\otimes\bR$ such that $v^2>0$ and $E = v^\perp\cap\Lambda$.
\end{lemma}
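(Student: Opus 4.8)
The plan is to produce the vector $v$ inside the real span of the orthogonal complement $F\colonequals E^\perp$, chosen generically so that the only lattice vectors it annihilates are exactly those of $E$.

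First I would record two consequences of the hypotheses. Since $\Lambda$ is non-degenerate and $E$ is saturated, taking orthogonal complements twice returns $E$: indeed $F^\perp\cap\Lambda\supseteq E$ always, while the rank formula $\operatorname{rank}S^\perp=\operatorname{rank}\Lambda-\operatorname{rank}S$ (valid for any sublattice $S$ of the non-degenerate lattice $\Lambda$) gives $\operatorname{rank}(F^\perp)=\operatorname{rank}E$; as both $F^\perp\cap\Lambda$ and $E$ are saturated and of equal rank, they coincide. Thus $E=F^\perp\cap\Lambda$. Second, the assumption that $E^\perp=F$ contains a vector of positive self-intersection means that the set $U\colonequals\{v\in F_\bR:v^2>0\}$ is nonempty, and it is open because the quadratic form is continuous.

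Any $v\in F_\bR$ automatically satisfies $E\subseteq v^\perp\cap\Lambda$, because every element of $E=F^\perp\cap\Lambda$ is orthogonal to all of $F$ and hence to $F_\bR$. The content is therefore the reverse inclusion $v^\perp\cap\Lambda\subseteq E$, which holds precisely when no $w\in\Lambda\setminus E$ lies in $v^\perp$. For a fixed $w\in\Lambda\setminus E$, the condition $w\notin E=F^\perp\cap\Lambda$ says that the linear functional $x\mapsto w\cdot x$ does not vanish identically on $F_\bR$; hence $H_w\colonequals\{v\in F_\bR:w\cdot v=0\}$ is a proper linear subspace of $F_\bR$, in particular closed, nowhere dense, and of Lebesgue measure zero.

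Finally I would invoke a Baire category (equivalently, a measure) argument. As $\Lambda$ is countable, $\bigcup_{w\in\Lambda\setminus E}H_w$ is a countable union of nowhere-dense sets and therefore cannot cover the nonempty open set $U$; I choose $v\in U\setminus\bigcup_w H_w$. Then $v^2>0$, and by construction $v\cdot w\neq0$ for every $w\in\Lambda\setminus E$, so $v^\perp\cap\Lambda\subseteq E$; combined with the automatic reverse inclusion this yields $E=v^\perp\cap\Lambda$, as required. The only genuinely delicate point is selecting $v$ simultaneously inside the positive cone $U$ and outside the countably many hyperplanes $H_w$, which is exactly what the category argument secures; everything else is the bookkeeping identity $E=(E^\perp)^\perp\cap\Lambda$ coming from saturation and non-degeneracy.
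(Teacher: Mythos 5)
Your proof is correct, and while it shares the paper's overall skeleton, the key technical device is genuinely different. Both arguments rest on the same two pillars: the identity $E=(E^\perp)^\perp\cap\Lambda$ (from non-degeneracy of $\Lambda$ plus saturation of $E$), and the observation that consequently each $w\in\Lambda\setminus E$ imposes a \emph{nontrivial} linear condition on $(E^\perp)_\bR$, so the vectors to be avoided form a countable union of proper hyperplanes $H_w$. The difference is in how the generic positive vector is then produced. The paper makes an explicit choice: it fixes $x\in E^\perp$ with $x^2>0$, a basis $e_1,\dots,e_m$ of $E^\perp$, and algebraically independent reals $c_1,\dots,c_m$, and sets $v=x+\epsilon\sum_i c_ie_i$ with $\epsilon\in\bQ\setminus\{0\}$ small; any $w\in(v^\perp\cap\Lambda)\setminus E$ would then force a nontrivial $\bQ$-linear relation among the $c_i$, contradicting their algebraic independence. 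You instead argue softly: the positive cone $U\subseteq(E^\perp)_\bR$ is open and nonempty, each $H_w$ is a closed nowhere dense (measure zero) subset, and by Baire category a countable union of such sets cannot cover $U$, so a suitable $v$ exists. Your route avoids transcendence theory altogether and generalizes verbatim to any setting where the bad locus is a countable union of proper subspaces, at the cost of being non-constructive; the paper's route exhibits a concrete $v$. One further point in your favor: you actually justify the identity $E=(E^\perp)^\perp\cap\Lambda$ via the rank formula together with saturation (equal rank plus containment of saturated sublattices forces equality), whereas the paper asserts this step without proof.
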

\begin{proof}
By hypothesis, there exists $x\in E^\perp$ such that $x^2>0$.
Suppose that $E^\perp$ is spanned by the basis
$
    \{e_1,\dots,e_m\}\subseteq\Lambda\setminus\{0\}.
$
Let $\{c_1,\dots,c_m\}\subseteq\bR$ be a set of real numbers ordered in such a way that the field extensions
\[
\mathbb{Q}\subseteq\mathbb{Q}(c_1)\subseteq\ldots\subseteq\mathbb{Q}(c_1,\ldots,c_m)
\]
are all transcendental extensions.
(For example, we can choose $c_i=2^{\pi^{i}}$.)~Let
\[
    y = \sum_{i=1}^mc_ie_i,
\]
and define
\[
    v\colonequals x+\epsilon y,
    \quad\epsilon\in\bQ\setminus\{0\}.
\]
Then $v^2>0$ if $|\epsilon|\ll 1$.

The inclusion $E\subseteq v^\perp\cap\Lambda$ is clear. We claim that $E = v^\perp\cap\Lambda$.
Assume, to the contrary, that there exists $w\in v^\perp\cap\Lambda$ such that $w\notin E$.
The condition $w\in v^\perp$ implies that
\[
    0 = v\cdot w
    = (x+\epsilon y)\cdot w
    = x\cdot w + \epsilon(y\cdot w),
\]
which is equivalent to
\begin{equation}
\label{eqn:linearCombTranscend}
    -\frac{1}{\epsilon}(x\cdot w)
    = y\cdot w
    = \sum_{i=1}^m c_i(e_i\cdot w).
\end{equation}
Note that $(E^\perp)^\perp = E$ since $\Lambda$ is non-degenerate and $E$ is saturated.
If $(e_i\cdot w) = 0$ for all $i$, then $w\in (E^\perp)^\perp = E$ which contradicts to the assumption that $w\notin E$.
Therefore, \eqref{eqn:linearCombTranscend} gives a nontrivial algebraic relation among $c_1$, ..., $c_m$ over $\bQ$,
which is impossible.
This completes the proof.
\end{proof}

Before proceeding to the next result, let us briefly review some background material. Our main reference is \cite{Huy16}*{Chapter~6}.
Let $\Lambda\colonequals\Lambda_{\Kthree}$.
Recall that the period domain of K3 surfaces is defined as
\[
    \cD\colonequals\{
        [x]\in\bP(\Lambda_\bC):
        x^2=0,\; x\cdot\bar{x}>0
    \}.
\]
By the Torelli theorem, for every period point $[x]\in\cD$, there exists a K3 surface $X$ together with an isometry
\[\xymatrix{
    \varphi\colon H^2(X,\bZ)\ar[r]^-\sim & \Lambda_{\Kthree},
}\]
which satisfies $\varphi(H^{2,0}(X,\bC)) = \bC x$ after extending the scalars to $\bC$.
It is a general fact that, given any analytic submanifold $B\subseteq\cD$, there exists a dense subset $NL(B)\subsetneq B$ consisting of countably many divisors, called the \emph{Noether--Lefschetz locus}, such that the Picard numbers of the K3 surfaces parametrized by $B$ jump on $NL(B)$ and retain minimum on the complement.

\begin{prop}
\label{prop:realizableLag}
Let $E\subsetneq\Lambda_{\Kthree}$ be a sublattice.
Then there exists a K\"ahler K3 surface $(X,\omega)$ such that $\Lag(X,\omega)\cong E$ if and only if $E$ is proper, saturated, and such that $E^\perp\subseteq\Lambda_{\Kthree}$ contains a vector with positive self-intersection.
\end{prop}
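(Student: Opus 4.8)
The necessity of the three conditions is essentially the discussion preceding Lemma~\ref{lemma:realizableOrthComp}, and I would dispatch it in a couple of lines. If $E=\Lag(X,\omega)=[\omega]^\perp\cap H^2(X,\bZ)$, then $E$ is cut out of $\Lambda_{\Kthree}$ by a single real hyperplane, hence saturated and of rank at most $21$, so proper; moreover $[\omega]\in(E^\perp)_\bR$ satisfies $[\omega]^2>0$, which forces the positive index of $E^\perp$ to be nonzero and therefore $E^\perp$ to contain an integral vector of positive self-intersection.

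For sufficiency the plan is to realize $E$ as $v^\perp\cap\Lambda_{\Kthree}$ for an honest Kähler class $v$ by choosing the complex structure generically. First I would invoke Lemma~\ref{lemma:realizableOrthComp} to produce $v\in(\Lambda_{\Kthree})_\bR$ with $v^2>0$ and $E=v^\perp\cap\Lambda_{\Kthree}$; the transcendence built into that construction also yields the auxiliary fact $\bC v\cap\Lambda_{\Kthree}=\{0\}$, i.e.\ no nonzero integral class is proportional to $v$. Next I would study the analytic submanifold of period points orthogonal to $v$,
\[
    B\colonequals\cD\cap\bP(v^\perp_\bC)\subseteq\cD.
\]
Since $v^\perp_\bR$ has signature $(2,19)$ it contains a positive-definite $2$-plane $\langle u_1,u_2\rangle$ (normalized so $u_1^2=u_2^2=1$, $u_1\cdot u_2=0$), and then $x=u_1+iu_2$ gives a point of $B$, so $B$ is a nonempty complex manifold of dimension $19$.

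The heart of the argument is to show that the generic member of $B$ has trivial Néron--Severi lattice. For a nonzero $y\in\Lambda_{\Kthree}$ the locus $\{[x]\in B:x\cdot y=0\}$ coincides with $B$ only if $v^\perp_\bC\subseteq y^\perp_\bC$, because $B$ is Zariski dense in the smooth irreducible quadric $\bP(v^\perp_\bC)\cap\{x^2=0\}$ and hence spans $v^\perp_\bC$; this forces $y\in(v^\perp_\bC)^\perp\cap\Lambda_{\Kthree}=\bC v\cap\Lambda_{\Kthree}=\{0\}$. Thus for every $y\neq 0$ the locus is a proper analytic subset, their countable union is the Noether--Lefschetz locus $NL(B)$, and for $[x]\in B\setminus NL(B)$ one has $\NS(X)=\{y\in\Lambda_{\Kthree}:x\cdot y=0\}=\{0\}$. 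I would then apply the Torelli theorem to the period $[x]$ to obtain a marked K3 surface $(X,\varphi)$; since $x\cdot v=0$ and $v$ is real, $v$ is of type $(1,1)$, and because $\NS(X)=0$ the surface carries no curves, so its Kähler cone is a full connected component of the positive cone. As $v^2>0$, exactly one of $\pm v$ lies in that component, and taking $\omega\colonequals\varphi^{-1}(v)$ or $\varphi^{-1}(-v)$ accordingly gives $\varphi(\Lag(X,\omega))=v^\perp\cap\Lambda_{\Kthree}=E$, that is, $\Lag(X,\omega)\cong E$.

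The main obstacle is precisely the step of certifying that $v$ is a genuine Kähler class rather than merely a positive $(1,1)$-class: a priori one must steer clear of the walls cut out by effective $(-2)$-curves. My strategy sidesteps wall-crossing entirely by driving the Picard number to its minimum value $0$ along $B$, after which there are no curves and the Kähler cone is the whole positive-cone component. The decisive input enabling this is the transcendence of $v$, which guarantees that no nonzero integral class is forced to be orthogonal to the generic period in $B$; I would make sure to flag that it is the construction inside Lemma~\ref{lemma:realizableOrthComp}, and not merely its statement, that supplies $\bC v\cap\Lambda_{\Kthree}=\{0\}$.
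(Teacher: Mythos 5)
Your necessity argument and the broad outline of your sufficiency argument track the paper, but there is a genuine gap: the ``auxiliary fact'' $\bC v\cap\Lambda_{\Kthree}=\{0\}$, which your whole Picard-number-zero strategy rests on, is false when $E$ has rank $21$, i.e.\ when $E^\perp$ has rank $1$. In that case $E^\perp=\langle e_1\rangle$ with $e_1^2>0$, and \emph{any} $v$ with $E=v^\perp\cap\Lambda_{\Kthree}$ must lie in $(E_\bR)^\perp=(E^\perp)_\bR=\bR e_1$; in particular the $v=x+\epsilon y$ produced by Lemma~\ref{lemma:realizableOrthComp} is $(b_1+\epsilon c_1)e_1$, a real multiple of an integral class, and no amount of transcendence helps (there are no cross-relations among the $c_i$ to exploit when $m=1$). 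Consequently every period $[x]\in B=\cD\cap\bP(v^\perp_\bC)$ has $e_1$ as an integral $(1,1)$-class, so the Picard number is at least $1$ on \emph{all} of $B$, your Noether--Lefschetz argument cannot produce $\NS(X)=0$, and the K\"ahler cone is never the full positive-cone component. The wall-crossing issue you set out to sidestep is therefore unavoidable in this case. (For $\mathrm{rank}\,E^\perp\geq2$ your auxiliary claim is true, but it needs the short computation you omitted: if $\lambda v=\sum a_ie_i\in E^\perp\setminus\{0\}$, then comparing coefficients gives $\lambda(b_i+\epsilon c_i)=a_i$, and eliminating $\lambda$ between two indices yields a nontrivial $\bQ$-linear relation among $1,c_i,c_j$ unless all $a_i=0$.)

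This is precisely why the paper's proof splits into two cases according to whether $rv\in\Lambda_{\Kthree}$ for some $r\neq0$. In that case the paper rescales $rv$ to be primitive, sets $d=(rv)^2>0$, and takes a very general point of $\cD_d=\bP(v^\perp)\cap\cD$: the corresponding K3 surface has $\NS(X)=\bZ\cdot rv$ of rank one, which contains no $(-2)$-classes since $(n\,rv)^2=n^2d>0$, so either $rv$ or $-rv$ is ample and hence a K\"ahler class, giving $\Lag(X,\omega)=v^\perp\cap\Lambda_{\Kthree}=E$. Your argument, once the independence computation is supplied, is essentially the paper's treatment of the complementary case $rv\notin\Lambda_{\Kthree}$ for all $r\neq0$ (very general point of $\bP(v^\perp)\cap\cD$, Picard number zero, K\"ahler cone equal to the positive cone). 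To repair your proposal, keep that argument for $\mathrm{rank}\,E^\perp\geq2$ and add the polarized, Picard-rank-one argument when $\mathrm{rank}\,E^\perp=1$.
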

\begin{proof}
The ``only if'' part is clear from the description before Lemma~\ref{lemma:realizableOrthComp}.
To prove the ``if'' part, let us denote $\Lambda\colonequals\Lambda_{\Kthree}$.
By Lemma~\ref{lemma:realizableOrthComp}, there exists $v\in\Lambda_\bR$ such that $v^2>0$ and $E=v^\perp\cap\Lambda$.
If we can find $x\in\cD$ such that either $v$ or $-v$ corresponds to a K\"ahler class $[\omega]$ on a K3 surface $X$ parametrized by $x$, then
\[
    \Lag(X,\omega)
    \cong[\omega]^\perp\cap\Lambda
    \cong E
\]
as desired.
We divide the proof into two cases.

First assume that $rv\in\Lambda$ for some $r\in\bR$. We can further assume that $rv$ is primitive in $\Lambda$ and let $d\colonequals(rv)^2$.
Note that $d$ is a positive even integer since $v^2>0$ and $\Lambda$ is even.
Then the locus
\[
    \cD_d\colonequals\bP(v^\perp)\cap\cD
\]
parametrizes polarized K3 surfaces of degree $d$,
and a very general point $x\in\cD_d$ corresponds to a K3 surface $X$ of Picard number one.
On such $X$, either $rv$ or $-rv$ is the class of an ample line bundle and thus is a K\"ahler class, so we finished this case.

Now assume that $rv\notin\Lambda$ for any $r\in\bR\setminus\{0\}$. 
A K3 surface $X$ parametrized by a very general point of the locus
\[
    \bP(v^\perp)\cap\cD
\]
has Picard number zero.
For such $X$, the K\"ahler cone and the positive cone coincide \cite{Huy16}*{Chapter~8, Corollary~5.3}.
In particular, $v^2>0$ implies that $v$ is a K\"ahler class. This completes the proof.
\end{proof}

\begin{cor}
\label{cor:lagE8}
There exists a K\"ahler K3 surface $(X,\omega)$ such that $\Lag(X,\omega)$ is isomorphic to the $E_8$ lattice.
\end{cor}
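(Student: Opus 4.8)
The plan is to invoke Proposition~\ref{prop:realizableLag} directly, taking $E$ to be a copy of the $E_8$ lattice sitting inside $\Lambda_{\Kthree}$, and to verify the three hypotheses of that proposition: that $E$ is proper, saturated, and that $E^\perp$ contains a vector of positive self-intersection. Once these are in place, the proposition immediately produces a K\"ahler K3 surface $(X,\omega)$ with $\Lag(X,\omega)\cong E\cong E_8$, which is exactly the assertion.

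The natural choice is to realize $E_8$ as one of the two orthogonal $E_8$ summands in the decomposition $\Lambda_{\Kthree} = U^{\oplus 3}\oplus E_8^{\oplus 2}$; write $E$ for the first such summand. First I would note that $E$ is proper, since $\mathrm{rank}\,E = 8 < 22 = \mathrm{rank}\,\Lambda_{\Kthree}$, so $E\subsetneq\Lambda_{\Kthree}$. Next, because $E$ is an orthogonal direct summand, the quotient $\Lambda_{\Kthree}/E$ is free, being isomorphic to the complementary summand $U^{\oplus 3}\oplus E_8$; this is precisely the statement that $E$ is saturated.

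It then remains to produce a positive vector in $E^\perp = U^{\oplus 3}\oplus E_8$. For this I would pick a hyperbolic plane $U$ inside $E^\perp$ with standard isotropic basis $\{e,f\}$ satisfying $e^2 = f^2 = 0$ and $e\cdot f = 1$; then $(e+f)^2 = 2 > 0$, so $e+f\in E^\perp$ has positive self-intersection. With all three hypotheses of Proposition~\ref{prop:realizableLag} verified, the conclusion follows.

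Since each step is a routine check of lattice-theoretic hypotheses, I do not expect any genuine obstacle here: the entire content of the corollary is carried by Proposition~\ref{prop:realizableLag}. The only point deserving a moment of care is the saturation claim, which is immediate once one exploits that $E$ is realized as a direct summand rather than merely as some primitively generated sublattice; any embedding of $E_8$ as an abstract sublattice would require an additional argument, whereas the summand embedding makes saturation transparent.
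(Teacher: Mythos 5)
Your proposal is correct and follows essentially the same route as the paper: both take $E$ to be one of the $E_8$ summands of $\Lambda_{\Kthree}=U^{\oplus 3}\oplus E_8^{\oplus 2}$, observe that $E^\perp\cong U^{\oplus 3}\oplus E_8$ contains a positive vector from a hyperbolic plane, and invoke Proposition~\ref{prop:realizableLag}. Your only addition is to spell out the properness and saturation checks that the paper leaves implicit, which is fine.
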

\begin{proof}
Consider $E_8$ as one of the $E_8$ copies in $\Lambda_{\Kthree}=U^{\oplus 3}\oplus E_8^{\oplus 2}$.
Then we have $E_8^\perp\cong U^{\oplus 3}\oplus E_8$, where one can easily produce a vector with positive self-intersection from the copies of $U$'s.
In particular, the existence is confirmed by Proposition~\ref{prop:realizableLag}.
\end{proof}

\subsection{The lattice-theoretic criteria}
\label{subsect:latticeCriteria}

Here we introduce our main result, namely, the criteria for determining whether $\SLag(X,\omega)$ is the whole $\Lag(X,\omega)$ or not. The criteria are established upon the following observation on lattices.

\begin{lemma}
\label{lemma:positiveOrNot}
Let $L$ be an arbitrary lattice.
\begin{enumerate}[label=\textup{(\roman*)}]
    \item\label{withPositive}
    If $L$ contains a vector with positive self-intersection, then every $\gamma\in L$ decomposes as $\gamma=\alpha+\beta$ with $\alpha^2>0$ and $\beta^2>0$.
    \item\label{nonorthogonalIsotropic}
    If $L$ contains an isotropic vector $\delta$ such that $\delta\cdot\alpha\neq0$ for some $\alpha\in L$, then $L$ contains a vector of positive self-intersection.
    \item\label{withoutPositive}
    If $L$ contains no vector with positive self-intersection,
    then there is an orthogonal decomposition $L\cong\bZ\oplus\cdots\oplus\bZ\oplus N$
    where each copy of $\bZ$ is spanned by an isotropic vector and $N$ is negative definite.
\end{enumerate}
\end{lemma}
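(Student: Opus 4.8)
The plan is to dispatch the first two parts by elementary one-parameter estimates and to reserve the real work for part~\ref{withoutPositive}. For \ref{withPositive}, fix $u\in L$ with $u^2>0$ and, given $\gamma$, write $\gamma=(\gamma-nu)+nu$ for a positive integer $n$: here $(nu)^2=n^2u^2>0$ for $n\neq0$, while $(\gamma-nu)^2=\gamma^2-2n(\gamma\cdot u)+n^2u^2$ is a quadratic in $n$ with positive leading coefficient $u^2$, hence positive for $n$ large, so $\alpha=\gamma-nu$ and $\beta=nu$ work. For \ref{nonorthogonalIsotropic}, with $\delta$ isotropic and $\delta\cdot\alpha\neq0$, set $v=\alpha+n\delta$; since $\delta^2=0$ we get $v^2=\alpha^2+2n(\alpha\cdot\delta)$, an affine function of $n$ with nonzero slope $2(\alpha\cdot\delta)$, so choosing $n$ of the appropriate sign and large absolute value makes $v^2>0$.

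The content lies in \ref{withoutPositive}. The hypothesis says precisely that the form is negative semi-definite: since $x^2\leq0$ on $L$, the same holds on $L_\bQ$ by homogeneity and on $L_\bR$ by continuity. The key step, which I expect to be the only genuine subtlety, is to show that for a negative semi-definite form the isotropic vectors are exactly the radical $L_0=\{x\in L:x\cdot y=0\text{ for all }y\in L\}$. For the nontrivial inclusion, suppose $x^2=0$; then for every $y\in L$ and every $t\in\bR$, using $x^2=0$, we have $(x+ty)^2=2t(x\cdot y)+t^2y^2\leq0$, and if $x\cdot y$ were nonzero the linear term would force this quantity to be positive for small $t$ of a suitable sign, a contradiction. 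Hence $x\cdot y=0$ for all $y$, i.e.\ $x\in L_0$; the reverse inclusion is immediate since radical vectors have square zero.

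To finish, observe that $L_0$ is saturated (if $nx\in L_0$ with $n\neq0$ then $n(x\cdot y)=0$, hence $x\cdot y=0$, for all $y$), so $L/L_0$ is free and the sequence $0\to L_0\to L\to L/L_0\to0$ splits as abelian groups, giving a decomposition $L\cong L_0\oplus N$ with $N\cong L/L_0$. Because $L_0$ is the radical it is orthogonal to all of $L$, and in particular to $N$, so this splitting is automatically orthogonal. The form vanishes identically on $L_0$, whence $L_0\cong\bZ\oplus\cdots\oplus\bZ$ with each summand spanned by an isotropic vector; and the form on $N$ is isomorphic to the induced form on $L/L_0$, which is non-degenerate because we quotiented by the radical and negative semi-definite by inheritance, hence negative definite. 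This produces the asserted orthogonal decomposition, completing the proof.
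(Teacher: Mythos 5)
Your proof is correct and follows essentially the same path as the paper's: parts (i) and (ii) are the identical one-parameter quadratic estimates, and part (iii) rests on the same key fact that isotropic vectors lie in the radical, after which one splits off the radical. The only differences are cosmetic: the paper obtains the radical property as the immediate contrapositive of (ii) while you re-derive it by a real scaling argument after extending semi-definiteness to $L_\bR$, and you spell out the saturation, splitting, and negative-definiteness details that the paper's proof leaves implicit.
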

\begin{proof}
Assume that there exists $x\in L$ such that $x^2>0$.
Let $\gamma\in L$ be an arbitrary vector and let $m$ be an integer.
Define $\alpha=mx$ and $\beta=\gamma-mx$.
Then $\gamma=\alpha+\beta$ and $\alpha^2=m^2(x^2)>0$.
Moreover, we have
\[
    \beta^2
    = (\gamma-mx)^2
    =\gamma^2-2m(\gamma\cdot x)+m^2(x^2),
\]
which becomes positive by picking $m\gg0$. Hence \ref{withPositive} is proved.

To prove \ref{nonorthogonalIsotropic}, let $m$ be any integer and consider the vector $m\delta+\alpha$. Since $\delta^2=0$, we have
\[
    (m\delta+\alpha)^2
    = 2m(\alpha\cdot\delta) + \alpha^2.
\]
If $\delta\cdot\alpha>0$, then $m\delta+\alpha$ has positive self-intersection as $m\gg0$.
If $\delta\cdot\alpha<0$, we can choose $m\ll0$ instead.

If $L$ contains no vector with positive self-intersection, then \ref{nonorthogonalIsotropic} implies that every isotropic vector in $\Lag(X,\omega)$ is orthogonal to every other vector.
Therefore, we have the claimed decomposition, which proves \ref{withoutPositive}.
\end{proof}

\begin{thm}
\label{thm:LagToSpLag}
Let $X$ be a K3 surface equipped with a K\"ahler form $\omega$.
\begin{enumerate}[label=\textup{(\arabic*)}]
    \item\label{withPositiveLag}
    If $\Lag(X,\omega)$ contains a vector with positive self-intersection, then
    \begin{equation}
    \label{eqn:Lag=SLag}
        \Lag(X,\omega)
        =\SLag(X,\omega)
        =\langle\delta\in\Lag(X,\omega):
            \delta^2\geq-2
        \rangle.
    \end{equation}
    \item\label{withoutPositiveLag}
    If $\Lag(X,\omega)$ contains no vector with positive self-intersection, then there is an orthogonal decomposition
    \[
        \Lag(X,\omega)
        \cong\bZ\oplus\cdots\oplus\bZ\oplus N,
    \]
    where each copy of $\bZ$ is spanned by an isotropic vector and $N$ is negative definite.
    In this case, \eqref{eqn:Lag=SLag} holds if and only if $N\subseteq\SLag(X,\omega)$.
\end{enumerate}
\end{thm}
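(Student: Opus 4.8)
The plan is to reduce both parts to the purely lattice-theoretic content of Lemma~\ref{lemma:positiveOrNot} applied to $L=\Lag(X,\omega)$, with Proposition~\ref{prop:numericalSpLag} used to control $\SLag(X,\omega)$. First I would record the two observations that hold in every case, regardless of the dichotomy: the middle equality $\SLag(X,\omega)=\langle\delta\in\Lag(X,\omega):\delta^2\geq-2\rangle$ in \eqref{eqn:Lag=SLag} is exactly Proposition~\ref{prop:numericalSpLag}, while the inclusion $\SLag(X,\omega)\subseteq\Lag(X,\omega)$ is automatic because every generating class is Lagrangian. Consequently the whole chain \eqref{eqn:Lag=SLag} is equivalent to the single inclusion $\Lag(X,\omega)\subseteq\SLag(X,\omega)$, so in both parts it suffices to decide when an arbitrary $\gamma\in\Lag(X,\omega)$ is a $\bZ$-combination of classes $\delta\in\Lag(X,\omega)$ with $\delta^2\geq-2$.

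For part~\ref{withPositiveLag}, I would invoke Lemma~\ref{lemma:positiveOrNot}\ref{withPositive} with $L=\Lag(X,\omega)$: every $\gamma$ can be written as $\gamma=\alpha+\beta$ with $\alpha,\beta\in\Lag(X,\omega)$ and $\alpha^2>0$, $\beta^2>0$. Since $\alpha^2,\beta^2>0>-2$, both $\alpha$ and $\beta$ lie among the generators of $\SLag(X,\omega)$ furnished by Proposition~\ref{prop:numericalSpLag}, hence $\gamma\in\SLag(X,\omega)$. This yields $\Lag(X,\omega)\subseteq\SLag(X,\omega)$, and combined with the previous paragraph it proves \eqref{eqn:Lag=SLag}.

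For part~\ref{withoutPositiveLag}, the orthogonal decomposition $\Lag(X,\omega)\cong\bZ\oplus\cdots\oplus\bZ\oplus N$, with each $\bZ$-summand spanned by an isotropic vector $e_i$ (say $i=1,\dots,r$) and $N$ negative definite, is precisely Lemma~\ref{lemma:positiveOrNot}\ref{withoutPositive}. The key point for the equivalence is that each $e_i$ satisfies $e_i^2=0\geq-2$, so $e_i\in\SLag(X,\omega)$ by Proposition~\ref{prop:numericalSpLag}; thus the summand $\bZ e_1\oplus\cdots\oplus\bZ e_r$ is automatically contained in $\SLag(X,\omega)$. Since $\Lag(X,\omega)$ is generated by these $e_i$ together with $N$, the inclusion $\Lag(X,\omega)\subseteq\SLag(X,\omega)$ holds if and only if $N\subseteq\SLag(X,\omega)$, which by the first paragraph is exactly the asserted equivalence with \eqref{eqn:Lag=SLag}.

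As for the main obstacle: all of the genuine geometry, namely producing special Lagrangians via the hyperk\"ahler rotation trick, has already been absorbed into Proposition~\ref{prop:numericalSpLag}, so the theorem itself is essentially lattice bookkeeping and I expect no serious difficulty in the argument above. The only point requiring care is the recognition that isotropic classes automatically qualify as special Lagrangian generators because $0\geq-2$, which is what collapses the decomposability question onto the negative-definite block $N$. The substantive difficulty is thereby \emph{isolated} rather than resolved: the theorem reduces everything to understanding $N\cap\SLag(X,\omega)$, and deciding membership of negative vectors (of self-intersection below $-2$) in $\SLag(X,\omega)$ is what the remaining sections must address.
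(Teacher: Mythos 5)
Your proof is correct and takes essentially the same route as the paper's: both parts reduce to Lemma~\ref{lemma:positiveOrNot} applied to $L=\Lag(X,\omega)$, combined with the numerical characterization $\SLag(X,\omega)=\langle\delta\in\Lag(X,\omega):\delta^2\geq-2\rangle$ from Proposition~\ref{prop:numericalSpLag}, with part~\ref{withPositiveLag} following from the decomposition $\gamma=\alpha+\beta$ into positive classes and part~\ref{withoutPositiveLag} from absorbing the isotropic summands into $\SLag(X,\omega)$. The only cosmetic difference is that you state up front the reduction of \eqref{eqn:Lag=SLag} to the single inclusion $\Lag(X,\omega)\subseteq\SLag(X,\omega)$, which the paper carries out implicitly within each case.
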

\begin{proof}
Assume that the lattice $\Lag(X,\omega)$ contains a vector with positive self-intersection. Then Lemma~\ref{lemma:positiveOrNot}~\ref{withPositive} implies that every $\gamma\in\Lag(X,\omega)$ can be written as $\gamma=\alpha+\beta$ where $\alpha,\beta\in\Lag(X,\omega)$ satisfy $\alpha^2>0$ and $\beta^2>0$.
Together with Proposition~\ref{prop:numericalSpLag}, we obtain
\[
    \Lag(X,\omega)
    \subseteq\langle
        \delta\in\Lag(X,\omega):
        \delta^2\geq-2
    \rangle
    =\SLag(X,\omega)
    \subseteq\Lag(X,\omega),
\]
which implies that the inclusions are actually equalities. This proves \ref{withPositiveLag}.

In condition~\ref{withoutPositiveLag}, the orthogonal decomposition for $\Lag(X,\omega)$ follows immediately from Lemma~\ref{lemma:positiveOrNot}~\ref{withoutPositive}.
Note that the isotropic part is contained in $\SLag(X,\omega)$ due to Proposition~\ref{prop:numericalSpLag}.
As a consequence, $\Lag(X,\omega)=\SLag(X,\omega)$ if and only if $\Lag(X,\omega)\subseteq\SLag(X,\omega)$, which holds if and only if $N\subseteq\SLag(X,\omega)$.
\end{proof}

\begin{rmk}
In condition~\ref{withoutPositiveLag} of Theorem~\ref{thm:LagToSpLag}, the inclusion 
\begin{equation}
\label{eqn:negativeSLag}
    N\subseteq\SLag(X,\omega)
    =\langle\delta\in\Lag(X,\omega):
            \delta^2\geq-2
    \rangle
\end{equation}
may hold or not.
For instance, in \cite{Wol05} Wolfson constructed an example such that $\Lag(X,\omega)\cong N\cong\langle-4\rangle$ which cannot be contained in $\SLag(X,\omega)$.
In particular, the equality $\Lag(X,\omega)=\SLag(X,\omega)$ does not hold for all possible complex structures with $\omega$ as a K\"ahler class.
On the other hand, many sublattices of the K3 lattice $\Lambda_{\Kthree}$ appear as Lagrangian sublattices according to Proposition~\ref{prop:realizableLag}.
For example, the situation $\Lag(X,\omega)\cong E_8$ can occur by Corollary~\ref{cor:lagE8}.
In this case, $N\cong E_8$ is generated by $(-2)$-vectors, so \eqref{eqn:negativeSLag} holds, and thus $\Lag(X,\omega)=\SLag(X,\omega)$.
\end{rmk}

Recall that a K3 surface $X$ is algebraic if and only if it admits a K\"ahler form $\omega$ which is \emph{rational}, i.e. the class $[\omega]$ belongs to $H^2(X,\bQ)$.
In this case, we have:

\begin{cor}
\label{cor:projK3}
Assume that $X$ is a K3 surface equipped with a rational K\"ahler form $\omega$.
Then $\Lag(X,\omega) = \SLag(X,\omega)$.
\end{cor}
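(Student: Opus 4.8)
The plan is to reduce everything to part~\ref{withPositiveLag} of Theorem~\ref{thm:LagToSpLag}: I would show that, when $\omega$ is rational, the Lagrangian lattice $\Lag(X,\omega)$ necessarily contains a vector of positive self-intersection, and then the theorem immediately delivers $\Lag(X,\omega)=\SLag(X,\omega)$.

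First I would normalize the setup. Scaling $[\omega]$ by a positive rational number leaves $[\omega]^\perp$ unchanged and preserves the K\"ahler condition up to a positive multiple, so I may assume $[\omega]\in H^2(X,\bZ)$ is a primitive integral class. Since $\omega$ is K\"ahler, $[\omega]^2>0$, so $\bZ[\omega]$ is a positive definite rank-one sublattice of $H^2(X,\bZ)\cong\Lambda_{\Kthree}$, a lattice of signature $(3,19)$.

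Next I would pin down the signature of $\Lag(X,\omega)$. The key role of rationality is that $[\omega]^\perp$ is then a rational hyperplane in $H^2(X,\bR)$, so $\Lag(X,\omega)=[\omega]^\perp\cap H^2(X,\bZ)$ has full rank $21$ and its real span is the orthogonal complement of a positive line inside a space of signature $(3,19)$. By Sylvester's law this span has signature $(2,19)$; in particular it contains vectors of positive self-intersection. Since $\Lag(X,\omega)\otimes\bQ$ is dense in this real span and the quadratic form is continuous, I can approximate any positive real vector by a rational one of positive self-intersection and clear denominators to land on an integral $\delta\in\Lag(X,\omega)$ with $\delta^2>0$. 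Feeding this $\delta$ into Theorem~\ref{thm:LagToSpLag}~\ref{withPositiveLag} finishes the argument.

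The hard part is conceptual rather than computational: it is exactly the step where rationality guarantees that the two positive directions of $([\omega]^\perp)_\bR$ descend to genuine integral classes. For an irrational K\"ahler class the orthogonal complement can meet the lattice in a negative definite sublattice carrying no positive vector---this is the mechanism behind Wolfson's counterexample recalled after Theorem~\ref{thm:LagToSpLag}---so the full-rank conclusion, and with it the whole corollary, can fail without the rationality hypothesis. I therefore expect the only care needed is in the signature bookkeeping that ensures $\Lag(X,\omega)$ really inherits the indefinite signature $(2,19)$.
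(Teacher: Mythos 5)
Your proposal is correct and follows exactly the paper's route: the paper's proof is the one-line observation that $\Lag(X,\omega)$ has signature $(2,19)$ (hence contains a vector of positive self-intersection) and then invokes Theorem~\ref{thm:LagToSpLag}~\ref{withPositiveLag}. Your write-up simply fills in the details the paper leaves implicit---rationality giving full rank $21$, Sylvester's law giving signature $(2,19)$, and the density argument producing an integral class $\delta$ with $\delta^2>0$---all of which are sound.
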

\begin{proof}
The lattice $\Lag(X,\omega)$ has signature $(2,19)$,
so the conclusion follows immediately from Theorem~\ref{thm:LagToSpLag}~\ref{withPositiveLag}.
\end{proof}

\begin{cor}
\label{cor:rationalPeriodK3}
Assume that $X$ is a K3 surface such that
\begin{equation}
\label{eqn:rationalPeriod}
    (H^{2,0}(X)\oplus H^{0,2}(X))\cap H^2(X,\bQ)\neq\{0\}.
\end{equation}
Let $\omega$ be any K\"ahler form $\omega$ on $X$.
Then $\Lag(X,\omega) = \SLag(X,\omega)$.
\end{cor}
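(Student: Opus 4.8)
The plan is to reduce the statement to part~\ref{withPositiveLag} of Theorem~\ref{thm:LagToSpLag} by producing a vector of positive self-intersection inside $\Lag(X,\omega)$. The hypothesis \eqref{eqn:rationalPeriod} hands us a nonzero class $\eta\in(H^{2,0}(X)\oplus H^{0,2}(X))\cap H^2(X,\bQ)$. After clearing denominators I may assume $\eta\in H^2(X,\bZ)$; this multiplies $\eta^2$ by a positive square and so does not affect its sign, nor does it move $\eta$ out of $H^{2,0}(X)\oplus H^{0,2}(X)$. I would then check two properties of $\eta$: that it is orthogonal to $[\omega]$, placing it in the Lagrangian lattice, and that $\eta^2>0$.

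For the orthogonality, I would use that the intersection form pairs $H^{p,q}(X)$ nontrivially only with $H^{2-p,2-q}(X)$, so that $H^{2,0}(X)\oplus H^{0,2}(X)$ is orthogonal to $H^{1,1}(X)$. Since $[\omega]\in H^{1,1}(X,\bR)$ and $\eta$ is a real class lying in $H^{2,0}(X)\oplus H^{0,2}(X)$, it follows that $\eta\cdot[\omega]=0$, whence $\eta\in\Lag(X,\omega)$ by the description \eqref{eqn:Lag(KahlerK3)}.

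For the positivity, I would write $\theta=a+ib$ with $a,b\in H^2(X,\bR)$ and read off the period relations $\theta^2=0$ and $\theta\cdot\bar\theta>0$: the first gives $a^2=b^2$ and $a\cdot b=0$, while the second gives $a^2+b^2>0$, so $a^2=b^2>0$. Hence the real plane $(H^{2,0}(X)\oplus H^{0,2}(X))\cap H^2(X,\bR)=\bR a\oplus\bR b$ is positive definite. Writing $\eta=\lambda a+\mu b$ with $(\lambda,\mu)\neq(0,0)$ yields $\eta^2=(\lambda^2+\mu^2)\,a^2>0$. Thus $\Lag(X,\omega)$ contains a vector of positive self-intersection, and Theorem~\ref{thm:LagToSpLag}~\ref{withPositiveLag} gives $\Lag(X,\omega)=\SLag(X,\omega)$.

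The argument is short, and the only step demanding attention is the translation of the period relations into positive-definiteness of the transcendental real $2$-plane; the rest is bookkeeping with the Hodge decomposition. I do not expect a genuine obstacle: the content of the corollary is precisely that condition \eqref{eqn:rationalPeriod} forces a rational, hence integral, vector into this positive-definite plane, which immediately triggers the favorable case of the main theorem.
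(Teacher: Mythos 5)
Your proposal is correct and follows essentially the same route as the paper: take a nonzero rational class in $(H^{2,0}(X)\oplus H^{0,2}(X))\cap H^2(X,\bQ)$, note it is orthogonal to $[\omega]$ and has positive square because the real transcendental $2$-plane is positive definite, and invoke Theorem~\ref{thm:LagToSpLag}~\ref{withPositiveLag}. Your write-up merely fills in details the paper leaves implicit (clearing denominators to land in $H^2(X,\bZ)$, and deriving positive-definiteness from the period relations $\theta^2=0$, $\theta\cdot\bar\theta>0$), which is fine.
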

\begin{proof}
Let $v$ be a nonzero vector in $(H^{2,0}(X)\oplus H^{0,2}(X))\cap H^2(X,\bQ)$. We have that $v\cdot\omega=0$. Moreover, $v^2>0$ because $H^{2,0}(X)\oplus H^{0,2}(X)$ is positive definite. Then the conclusion follows from Theorem~\ref{thm:LagToSpLag}~\ref{withPositiveLag}.
\end{proof}

\begin{rmk}
The K3 surfaces satisfying the assumption of Corollary~\ref{cor:rationalPeriodK3} form a family of $\bR$-dimension $20$ in the moduli of K3 surfaces.
\end{rmk}

\subsection{Distribution of K\"ahler classes giving the decomposition}
\label{subsect:kahlerCone}

Given a K3 surface $X$, for which class $[\omega]$ in the K\"ahler cone \[
    \cK_X\subseteq H^{1,1}(X,\bR)
\]
do we have $\Lag(X,\omega)=\SLag(X,\omega)$?
In the situation where $\Lag(X,\omega)=0$, which does occur by Proposition~\ref{prop:realizableLag}, the sublattice $\SLag(X,\omega)$ must be zero as well, hence the equality holds trivially.
Therefore, we will focus on the case that $\Lag(X,\omega)\neq0$.

The cone $\cC\colonequals\{x\in H^{1,1}(X,\bR):x^2>0\}$
consists of two connected components.
Let $\cC_X$ denote the positive cone, that is, the component which contains $\cK_X$.
We first show that, in fact, a similar density property can be seen in $\cC_X$ already, in a sense made precise in the next proposition.

\begin{prop}
\label{prop:denseInPosCone}
On a K3 surface $X$ not satisfying the hypothesis of Corollary~\ref{cor:rationalPeriodK3}, the classes $w\in\cC_X$ whose orthogonal complement $w^\perp\cap H^2(X,\bZ)$ contains a vector $v$ with $v^2>0$ form a dense subset $\mathcal{P}_X\subseteq\cC_X$ in the analytic topology.
This subset is a union of countably many hyperplane sections and it contains $\cC_X\cap H^{1,1}(X, \bQ)$.
\end{prop}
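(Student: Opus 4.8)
The plan is to establish the three assertions separately. Write $\Lambda\colonequals H^2(X,\bZ)$, let $P\colonequals(H^{2,0}(X)\oplus H^{0,2}(X))\cap H^2(X,\bR)$ be the positive-definite real $2$-plane spanned by $\mathrm{Re}\,\theta,\mathrm{Im}\,\theta$, let $\pi\colon H^2(X,\bR)\to H^{1,1}(X,\bR)$ be the orthogonal projection (so $\ker\pi=P$), and let $\pi_P\colonequals\mathrm{id}-\pi$ be the complementary projection onto $P$. For $v\in\Lambda$ and $w\in\cC_X\subseteq H^{1,1}(X,\bR)$ one has $v\cdot w=\pi(v)\cdot w$, whence
\[
\mathcal{P}_X=\bigcup_{\substack{v\in\Lambda\\ v^2>0}}\bigl(v^\perp\cap\cC_X\bigr),
\]
a countable union of hyperplane sections of $\cC_X$; since $P\cap\Lambda=0$ under our hypothesis, no $v^\perp$ (with $v\neq 0$) contains all of $H^{1,1}(X,\bR)$, so these are genuine hyperplanes. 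The containment $\cC_X\cap H^{1,1}(X,\bQ)\subseteq\mathcal{P}_X$ is immediate: if $w$ is rational with $w^2>0$, then $w^\perp$ is defined over $\bQ$ of signature $(2,19)$, so $w^\perp\cap\Lambda$ is a rank-$21$ lattice of the same signature and therefore contains a vector of positive self-intersection.

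For the density I would fix $w_0\in\cC_X$ and $\epsilon>0$ and manufacture a single $v\in\Lambda$ with $v^2>0$ whose orthogonal hyperplane meets $\cC_X$ within $\epsilon$ of $w_0$. The guiding observation is that $w_0^\perp\cap H^{1,1}(X,\bR)$ is negative definite of dimension $19$, so \emph{every} nonzero class perpendicular to $w_0$ in $H^{1,1}(X,\bR)$ automatically has negative self-intersection. Hence it suffices to produce $v$ with $v^2>0$ whose projection $\pi(v)$ lies very close to a fixed nonzero $q\in w_0^\perp\cap H^{1,1}(X,\bR)$: then $\pi(v)^2\approx q^2<0$ guarantees that $\pi(v)^\perp$ meets $\cC_X$, while $\pi(v)\cdot w_0\approx q\cdot w_0=0$ forces that hyperplane to pass within $\epsilon$ of $w_0$; the orthogonal projection of $w_0$ onto it then lands in the open set $\cC_X$ and exhibits a point of $\mathcal{P}_X$ within $\epsilon$ of $w_0$, so $w_0\in\overline{\mathcal{P}_X}$.

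To build such a $v$ I would pass to the transcendental lattice $T\colonequals\NS(X)^\perp\cap\Lambda$, with $T\otimes\bR=P\oplus W_T$ where $W_T\colonequals(T\otimes\bR)\cap H^{1,1}(X,\bR)$. Since $T\cap\NS(X)=0$ one checks that $\pi|_T\colon T\to W_T$ has dense image, and since $P\cap\Lambda=0$ it is injective. This gives the key \emph{slab principle}: for any $q_T\in W_T$ and any $R>0$ the set $\{t\in T:\|\pi(t)-q_T\|<\delta\}$ is infinite (by density), hence, being bounded in the $W_T$-direction but infinite in the discrete lattice $T$, must contain an element with $\pi_P(t)^2>R$. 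A short dimension count, using $H^{1,1}(X,\bR)=\NS(X)_{\bR}\oplus W_T$ together with $\dim W_T=20-\rho\ge 1$ (the case $\rho=20$ being exactly the one excluded by the hypothesis of Corollary~\ref{cor:rationalPeriodK3}), produces a nonzero target $q=n+q_T\in w_0^\perp\cap H^{1,1}(X,\bR)$ with $n\in\NS(X)$ and $q_T\in W_T$. Applying the slab principle, I choose $t\in T$ with $\pi(t)$ within $\delta$ of $q_T$ and with $\pi_P(t)^2$ as large as needed, and set $v\colonequals n+t\in\Lambda$. Then $\pi(v)=n+\pi(t)\approx q$, so $\pi(v)^2\approx q^2<0$ and $\pi(v)\cdot w_0\approx 0$, while $v^2=n^2+\pi_P(t)^2+\pi(t)^2>0$ once $\pi_P(t)^2$ is large; this is the desired $v$.

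The hard part will be reconciling the two inequalities $\pi(v)^2<0$ (so that $v^\perp$ actually meets the positive cone) and $v^2>0$ (so that $v$ witnesses membership in $\mathcal{P}_X$): because $v^2=\pi_P(v)^2+\pi(v)^2$, these pull in opposite directions and can only be reconciled by forcing the $P$-component $\pi_P(v)$ to be large while $\pi(v)$ stays pinned near a prescribed negative direction. It is precisely the irrationality of the period --- that is, $P\cap\Lambda=0$, equivalently the exclusion of Corollary~\ref{cor:rationalPeriodK3} --- which, through the slab principle, makes arbitrarily large $\pi_P(v)$ available under this constraint. The remaining bookkeeping (constructing the integral target $q$ by the dimension count, and verifying that the foot of the perpendicular from $w_0$ to $\pi(v)^\perp$ remains in $\cC_X$) is routine.
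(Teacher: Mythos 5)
Your route is genuinely different from the paper's: the paper projects the whole set $\{x\in H^2(X,\bQ):x^2>0\}$ into $H^{1,1}(X,\bR)$ and transfers density through a soft incidence-variety argument (Lemma~\ref{lemma:denseHyperSec}), whereas you fix a target $w_0$ and manufacture a single integral witness $v$ via a Kronecker-type density statement for the projection of the transcendental lattice; your treatment of rational classes (the lattice $w^\perp\cap H^2(X,\bZ)$ has signature $(2,19)$, hence contains a positive vector) is also more elementary than the paper's appeal to Eichler's criterion. However, there is a genuine gap: two of your structural claims fail when $\NS(X)$ is \emph{degenerate}, and this case does occur under the hypotheses of the proposition. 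Writing $\Lambda\colonequals H^2(X,\bZ)$, $T\colonequals\NS(X)^\perp\cap\Lambda$, $W_T\colonequals T_\bR\cap H^{1,1}(X,\bR)$ as in your proposal: for a non-algebraic K3 surface whose N\'eron--Severi lattice is negative semi-definite with nontrivial radical (e.g.\ $\NS(X)=\bZ e$ with $e$ primitive isotropic and the period very general in $e^\perp$, which is compatible with $(H^{2,0}\oplus H^{0,2})\cap H^2(X,\bQ)=0$), the radical vector $e$ is orthogonal to all of $\NS(X)$, hence lies in $T\cap\NS(X)$, so $T\cap\NS(X)\neq0$; moreover $e\in\NS(X)_\bR\cap W_T$, so $H^{1,1}(X,\bR)\neq\NS(X)_\bR\oplus W_T$. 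Thus both the justification of your key density claim (``since $T\cap\NS(X)=0$ \dots'') and the dimension count producing the target $q$ break down exactly in this parabolic case. (A smaller point: the hypothesis of Corollary~\ref{cor:rationalPeriodK3} can hold with $\rho<20$, so $\rho=20$ is not ``exactly'' the excluded case; but you only use the correct direction, namely that the exclusion forces $\rho\le 19$, i.e.\ $W_T\neq0$.)

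The gap is repairable, because the statement you actually need --- density of $\pi(T)$ in $W_T$ --- holds \emph{unconditionally}. By Kronecker's criterion, density fails only if some nonzero functional on $W_T$ is $\bZ$-valued on $\pi(T)$; such a functional pulls back to a $\bZ$-valued functional on $T$ vanishing on $P$. Since $T$ is primitive in the unimodular lattice $\Lambda$, every $\bZ$-valued functional on $T$ is of the form $\langle v,\cdot\rangle|_{T_\bR}$ for some $v\in\Lambda$; vanishing on $P$ forces $v\in P^\perp\cap\Lambda=\NS(X)$, and then $v$ annihilates $T_\bR\supseteq W_T$, so the functional is zero. (Your criterion $T\cap\NS(X)=0$ is equivalent to nondegeneracy of $\NS(X)$ and suffices only in that case.) With density restored, your slab principle goes through verbatim, and the target $q=n+q_T\in w_0^\perp\cap H^{1,1}(X,\bR)$ with $n\in\NS(X)$, $q_T\in W_T$ can still be produced without the direct-sum decomposition: take $n=0$ if $W_T\cap w_0^\perp\neq0$; otherwise $\dim W_T=1$, the functional $q_T\mapsto q_T\cdot w_0$ is onto $\bR$, and any $n\in\NS(X)$ not lying in $W_T$ yields a nonzero $q$. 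As written, though, your argument proves the proposition only for K3 surfaces with nondegenerate $\NS(X)$, which excludes part of the non-projective range the statement is meant to cover.
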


The key idea behind the proof of the above proposition is the following lemma.

\begin{lemma}
\label{lemma:denseHyperSec}
Let $\cC^c$ be the complement of $\cC$ in $H^{1,1}(X,\bR)$.
Suppose that $\mathcal{A}\subseteq\cC^c\setminus\{0\}$ is a dense subset.
Then the union
\[
    \bigcup_{a\in\mathcal{A}}a^\perp\cap\mathcal{C}_X
\]
is dense in $\cC_X$.
\end{lemma}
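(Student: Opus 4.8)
The plan is to fix a target point $w\in\cC_X$ together with an arbitrarily small open neighborhood $U\subseteq\cC_X$ of $w$, and to exhibit an element $a\in\mathcal{A}$ whose hyperplane $a^\perp$ meets $U$. The starting observation is a signature computation: since $(H^{2,0}(X)\oplus H^{0,2}(X))_\bR$ is positive definite of real dimension $2$ inside the signature-$(3,19)$ space $H^2(X,\bR)$, the intersection form on $H^{1,1}(X,\bR)$ has signature $(1,19)$. Consequently, for our chosen $w$ with $w^2>0$ the orthogonal complement $w^\perp\subseteq H^{1,1}(X,\bR)$ is negative definite of dimension $19$. In particular any nonzero $a_0\in w^\perp$ satisfies $a_0^2<0$, so $a_0\in\cC^c\setminus\{0\}$; and by construction $w\in a_0^\perp\cap\cC_X$, so this ``anchor'' hyperplane already meets the positive cone at $w$.

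Next I would exploit the density of $\mathcal{A}$ near $a_0$ together with a perturbation argument. Since the intersection form is non-degenerate and $a_0\neq0$, I can fix a vector $v\in H^{1,1}(X,\bR)$ with $a_0\cdot v=1$. For any $a$ sufficiently close to $a_0$ we have $a\cdot v\neq0$, and I set
\[
    u\colonequals w-\frac{a\cdot w}{a\cdot v}\,v,
\]
which satisfies $a\cdot u=0$ by direct computation, i.e. $u\in a^\perp$. As $a\to a_0$ the coefficient $a\cdot w\to a_0\cdot w=0$ while $a\cdot v\to1$, so $u\to w$. Because $\cC_X$ is open and contains $w$, for $a$ close enough to $a_0$ the point $u$ lies in $U\subseteq\cC_X$, hence $u\in a^\perp\cap\cC_X\cap U$.

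To finish, since $\mathcal{A}$ is dense in $\cC^c\setminus\{0\}$ and $a_0$ lies in the interior of $\cC^c$ (as $a_0^2<0$ strictly), I can choose $a\in\mathcal{A}$ arbitrarily close to $a_0$, and the previous step then guarantees $a^\perp\cap\cC_X$ meets $U$. As $w$ and $U$ were arbitrary, this shows $\bigcup_{a\in\mathcal{A}}a^\perp\cap\cC_X$ is dense in $\cC_X$.

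The only genuinely delicate point is the second step: one must ensure that perturbing the anchor hyperplane $a_0^\perp$ to a nearby $a^\perp$ keeps an intersection point inside the \emph{open} positive cone rather than sliding it out into $\cC^c$. The explicit correction $u=w-\frac{a\cdot w}{a\cdot v}v$ resolves this by producing a point of $a^\perp$ that depends continuously on $a$ and specializes to $w$ at $a=a_0$; everything else is the signature bookkeeping that places $a_0$ inside $\cC^c\setminus\{0\}$ in the first place.
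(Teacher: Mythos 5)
Your argument is correct, and it reaches the same two milestones as the paper's proof, but by different means. Both proofs rest on: (i) every $w\in\cC_X$ is orthogonal to some nonzero $a_0\in\cC^c$, and (ii) density of $\mathcal{A}$ lets one replace $a_0$ by a nearby $a\in\mathcal{A}$ whose hyperplane still passes through the prescribed neighborhood of $w$. For (i) the paper invokes \cite{BHPV04}*{IV~Corollary~7.2} to get $w^\perp\cap\cC_X=\emptyset$, hence $w^\perp\subseteq\cC^c$, together with a short (and somewhat awkward) argument that $w^\perp$ actually meets $\cC^c\setminus\{0\}$; your signature computation --- $H^{1,1}(X,\bR)$ has signature $(1,19)$, so $w^\perp$ is negative definite --- gives the same conclusion more directly and in slightly stronger form, since it shows \emph{every} nonzero vector of $w^\perp$ is an admissible anchor. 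For (ii) the paper sets up the incidence correspondence $\mathcal{U}=\{(a,z)\in(\cC^c\setminus\{0\})\times H^{1,1}(X,\bR): z\in a^\perp\}$, observes that the projection to the base is an open map (a rank-$19$ real vector bundle), pulls $\mathcal{A}$ back to a dense subset of $\mathcal{U}$, and pushes forward along the second projection; you instead write down the explicit point $u=w-\frac{a\cdot w}{a\cdot v}\,v$ of $a^\perp$, which is in effect a hand-built continuous local section of that bundle through $(a_0,w)$, making the perturbation step elementary and self-contained. What each approach buys: the paper's incidence-variety formulation is uniform in the codimension and is reused essentially verbatim for the codimension-two analogue (Lemma~\ref{lemma:denseCodimTwoSec}, where two normals are perturbed simultaneously), whereas your version avoids both the bundle formalism and the external citation, at the modest cost of needing a new correction formula if one wants that two-normal statement.
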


\begin{proof}
Let $x\in\cC_X$. Then $x^\perp\cap\cC_X=\emptyset$ by \cite{BHPV04}*{IV~Corollary~7.2}.
In particular, we have $x^\perp\subseteq\cC^c$. If, by contradiction, $x^\perp$ does not intersect $\overline{\cC}^c$, then it would be contained in the boundary of $\cC$, which cannot happen.
So there exists $a\in\overline{\cC}^c$ such that $a\cdot x=0$.
It follows that
\[
    \cC_X\subseteq\bigcup_{a\in\overline{\cC}^c}a^\perp\subseteq\bigcup_{a\in\cC^c\setminus\{0\}}a^\perp.
\]
Hence it is sufficient to show that
\begin{equation}
\label{eqn:denseHyperInHyper}
    \bigcup_{a\in\mathcal{A}}a^\perp
    \subseteq
    \bigcup_{a\in\cC^c\setminus\{0\}}a^\perp
\end{equation}
is a dense subset.
For simplicity of notation, let $V\colonequals H^{1,1}(X,\bR)$. Consider the diagram
\[\xymatrix{
    & \mathcal{U}\colonequals\{(a,z)\in\cC^c\setminus\{0\}\times V : a^\perp\ni z\}\ar[dl]_-{\pi_1}\ar[dr]^-{\pi_2} &\\
    \cC^c\setminus\{0\} && V.
}\]
Then $\pi_1$ is a vector bundle fibered in $19$-dimensional real vector spaces, and the image of $\pi_2$ coincides with the right hand side of \eqref{eqn:denseHyperInHyper}. In particular, $\pi_1$ is an open map. 
Because $\mathcal{A}$ is dense in $\cC^c\setminus\{0\}$, $\pi_1^{-1}(\mathcal{A})$ is dense in $\mathcal{U}$ and thus
\[
    \bigcup_{a\in\mathcal{A}}a^\perp
    = \pi_2(\pi_1^{-1}(\mathcal{A}))
\]
is dense in the image of $\pi_2$, which completes the proof.
\end{proof}

\begin{proof}[Proof of Proposition~\ref{prop:denseInPosCone}]
The goal is to write $\mathcal{P}_X$ as in the statement of Lemma~\ref{lemma:denseHyperSec} for some dense $\mathcal{A}\subseteq\cC^c\setminus\{0\}$. Define
\[
    \cQ^+\colonequals\{x\in H^2(X,\bQ):x^2>0\}.
\]
First note that
\[
	\mathcal{P}_X = \bigcup_{x\in\cQ^+}x^\perp\cap\cC_X.
\]
Indeed, if $w\in\cC_X$ admits the existence of $v\in w^\perp\cap H^2(X, \bZ)$ with $v^2>0$, then $v\in\cQ^+$ and $w\in v^\perp\cap\cC_X$. Conversely, for every $w\in x^\perp\cap\cC_X$, we can multiply $x$ by some integer $n$ to get $v\colonequals nx\in H^2(X, \bZ)$. Then $v^2>0$, and $w\in x^\perp$ implies that $v=nx\in w^\perp$.

Consider the orthogonal decomposition
\[
    H^2(X,\bR)\cong
    H^{1,1}(X,\bR)\oplus(H^{2,0}(X)\oplus H^{0,2}(X))_\bR
\]
together with the induced projection
\[\xymatrix{
    \pi\colon H^2(X,\bR)\ar[r] & H^{1,1}(X,\bR).
}\]
It is easy to verify that $x^\perp\cap\cC_X=\pi(x)^\perp\cap\cC_X$ for all $x\in H^2(X,\bR)$, so
\[
	\mathcal{P}_X = \bigcup_{x\in\pi(\cQ^+)}x^\perp\cap\cC_X,
\]
where the orthogonal complement $x^\perp$ is taken in $H^{1,1}(X,\bR)$. We define
\[
\mathcal{A}\colonequals \pi(\mathcal{Q}^+)\cap\cC^c.
\]
Notice that automatically $0\notin\mathcal{A}$ because $0\notin\pi(\mathcal{Q}^+)$ since, by hypothesis,
\[
    (H^{2,0}(X)\oplus H^{0,2}(X))\cap H^2(X,\mathbb{Q})=0.
\]
By Lemma~\ref{lemma:denseHyperSec}, we want to show that $\mathcal{A}$ is dense in $\cC^c\setminus\{0\}$ and that
\[
    \mathcal{P}_X = \bigcup_{a\in\mathcal{A}}a^\perp\cap\cC_X.
\]

Let us start with showing that $\mathcal{A}\subseteq\cC^c\setminus\{0\}$ is dense. Consider
\[
    \cQ^+_\bR\colonequals
    \{x\in H^2(X,\bR):x^2>0\}.
\]
Note that $\cQ^+\subseteq\cQ^+_\bR$ is dense.
We claim that
\[
\pi(\cQ^+_\bR)=H^{1,1}(X,\bR).
\]
Indeed, the intersection pairing on
$
    (H^{2,0}(X)\oplus H^{0,2}(X))_\bR
$
has signature $(2,0)$, so this space contains a vector $z$ such that $z^2>0$.
For every $y\in H^{1,1}(X,\bR)$, define
\[
    x\colonequals y+cz,\quad c>0.
\]
Then $\pi(x)=y$ and $x$ lies in $\cQ^+_\bR$ for $c$ sufficiently large.
As a result, $\pi$ restricts to a surjective map
\[\xymatrix{
    \rho\colon \cQ^+_\bR\ar[r] & H^{1,1}(X,\bR).
}\]
It follows that $\rho(\cQ^+)$ is dense in $H^{1,1}(X,\bR)$,
so $\mathcal{A}=\rho(\cQ^+)\cap\cC^c$ is dense in $\cC^c\setminus\{0\}$.

Define $\mathcal{A}^c\colonequals\rho(\cQ^+)\cap\cC$, the complement of $\mathcal{A}$ in $\rho(\cQ^+)$.
Then
\[
    \mathcal{P}_X = \left(
    \bigcup_{a\in\mathcal{A}}a^\perp\cap\cC_X
    \right)\cup\left(
    \bigcup_{b\in\mathcal{A}^c}b^\perp\cap\cC_X
    \right).
\]
The fact that $x\cdot y>0$ for all $x,y\in\cC_X$ \cite{BHPV04}*{IV~Corollary~7.2} implies that the latter subset is in fact empty. Hence
\[
    \mathcal{P}_X = \bigcup_{a\in\mathcal{A}}a^\perp\cap\cC_X
\]
and the density property follows by applying Lemma~\ref{lemma:denseHyperSec}.

To prove the last sentence, first notice that $\cQ^+$ is countable, so $\mathcal{A}$ is countable. Hence $\mathcal{P}_X$ is a union of countably many hyperplane sections.
To see that it contains $\cC_X\cap H^{1,1}(X, \bQ)$, assume that the latter set contains a nonzero element and rescale it as a primitive vector
\[
	w\in\cC_X\cap H^{1,1}(X, \bZ).
\] 
Let $m\colonequals w^2>0$. By Eichler, we can choose an isometry $H^2(X,\bZ)\cong\Lambda_{\Kthree}$ such that
\[
    w = (e+mf,0,0,0,0)\in\Lambda_{\Kthree}
    = U^{\oplus 3}\oplus E_8^{\oplus 2}
\]
where $\{e, f\}$ is a basis for the first copy of $U$ such that $e^2=f^2=0$ and $e\cdot f=1$.
(See \cite{GHS09}*{Proposition~3.3} for a more precise statement and for the reference to Eichler's original work.)
It is easy to find $x\in\Lambda_{\Kthree}$ such that $x^2>0$ and $x\cdot w=0$. For example, one can pick $x = e'+f'$ where $\{e',f'\}$ is the standard basis for the second copy of $U$. As a result, $x$ represents an element in $\cQ^+$ such that $w\in x^\perp\cap\cC_X$. Hence $w\in\mathcal{P}_X$.
\end{proof}

\begin{thm}
\label{thm:denseInKahlerCone}
For every K3 surface $X$, there exists a subset $\mathcal{S}_X$ in the K\"ahler cone $\cK_X$ which is dense in the analytic topology such that
\begin{equation}
\label{eqn:Lag=SLagNonzero}
\Lag(X,\omega) = \SLag(X,\omega)\neq0,
\end{equation}
for every $[\omega]\in\mathcal{S}_X$. Moreover, the following hold:
\begin{itemize}
    \item If $X$ satisfies the hypothesis of Corollary~\ref{cor:rationalPeriodK3}, then $\mathcal{S}_X=\cK_X$.
    \item Otherwise, $\mathcal{S}_X$ is a countable union of hyperplane sections of $\cK_X$. Furthermore, it contains all rational K\"ahler classes, so \eqref{eqn:Lag=SLagNonzero} holds in particular for polarized K3 surfaces.
\end{itemize}
\end{thm}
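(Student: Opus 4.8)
The plan is to deduce this theorem almost directly from the results already assembled, splitting into the same two cases that govern Corollary~\ref{cor:rationalPeriodK3} and Proposition~\ref{prop:denseInPosCone}. In each case the core identity $\Lag(X,\omega)=\SLag(X,\omega)$ will come from Theorem~\ref{thm:LagToSpLag}~\ref{withPositiveLag}, so the real work is to produce, for a dense set of Kähler classes $[\omega]$, a vector of positive self-intersection inside $\Lag(X,\omega)=[\omega]^\perp\cap H^2(X,\bZ)$; this will simultaneously force $\Lag(X,\omega)\neq0$.

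First I would treat the case where $X$ satisfies the hypothesis of Corollary~\ref{cor:rationalPeriodK3}. Here I set $\mathcal{S}_X=\cK_X$. For any Kähler class $[\omega]$, Corollary~\ref{cor:rationalPeriodK3} already gives $\Lag(X,\omega)=\SLag(X,\omega)$. To see the nonvanishing, pick a nonzero $v\in(H^{2,0}(X)\oplus H^{0,2}(X))\cap H^2(X,\bQ)$, clear denominators so that $nv\in H^2(X,\bZ)$, and observe that $nv\cdot[\omega]=0$ (since $[\omega]$ is of type $(1,1)$) while $(nv)^2>0$ (since the $(2,0)\oplus(0,2)$ part is positive definite). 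Thus $nv\in\Lag(X,\omega)$ is a positive vector, giving $\Lag(X,\omega)\neq0$, and $\mathcal{S}_X=\cK_X$ is as claimed.

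Next I would treat the complementary case, where $X$ fails that hypothesis, so Proposition~\ref{prop:denseInPosCone} applies and furnishes the dense subset $\mathcal{P}_X\subseteq\cC_X$ of classes $w$ admitting a positive vector in $w^\perp\cap H^2(X,\bZ)$. I would simply define $\mathcal{S}_X\colonequals\mathcal{P}_X\cap\cK_X$. For every $[\omega]\in\mathcal{S}_X$, the defining property of $\mathcal{P}_X$ says that $\Lag(X,\omega)=[\omega]^\perp\cap H^2(X,\bZ)$ contains a vector of positive self-intersection; hence Theorem~\ref{thm:LagToSpLag}~\ref{withPositiveLag} yields $\Lag(X,\omega)=\SLag(X,\omega)$, and this lattice is nonzero. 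The structural claims then descend from $\mathcal{P}_X$: since $\mathcal{P}_X$ is a countable union of hyperplane sections of $\cC_X$, intersecting with $\cK_X$ exhibits $\mathcal{S}_X$ as a countable union of hyperplane sections of $\cK_X$; and since $\mathcal{P}_X\supseteq\cC_X\cap H^{1,1}(X,\bQ)$ while every rational Kähler class lies in $\cK_X\cap H^{1,1}(X,\bQ)\subseteq\cC_X\cap H^{1,1}(X,\bQ)$, all rational Kähler classes lie in $\mathcal{S}_X$.

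The one point that requires an honest (if short) argument, and which I expect to be the main obstacle, is density: I have density of $\mathcal{P}_X$ in $\cC_X$ but need density of $\mathcal{S}_X=\mathcal{P}_X\cap\cK_X$ in $\cK_X$. This follows from the general fact that a dense subset meets every nonempty open set: since $\cK_X$ is open in $H^{1,1}(X,\bR)$, hence open in $\cC_X$, any nonempty open $V\subseteq\cK_X$ is also open in $\cC_X$, so density of $\mathcal{P}_X$ forces $\mathcal{P}_X\cap V\neq\emptyset$, i.e.\ $\mathcal{S}_X\cap V\neq\emptyset$. Everything else is bookkeeping built on Theorem~\ref{thm:LagToSpLag}, Corollary~\ref{cor:rationalPeriodK3}, and Proposition~\ref{prop:denseInPosCone}.
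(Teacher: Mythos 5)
Your proof is correct and follows essentially the same route as the paper: in the generic case you define $\mathcal{S}_X\colonequals\mathcal{P}_X\cap\cK_X$ and invoke Theorem~\ref{thm:LagToSpLag}~\ref{withPositiveLag}, exactly as the paper does. Your treatment is in fact slightly more complete, since you explicitly handle the case where $X$ satisfies the hypothesis of Corollary~\ref{cor:rationalPeriodK3} (producing the positive integral vector in $(H^{2,0}(X)\oplus H^{0,2}(X))\cap H^2(X,\bQ)$ to get $\Lag(X,\omega)\neq0$) and spell out why density of $\mathcal{P}_X$ in $\cC_X$ restricts to density of $\mathcal{S}_X$ in the open subcone $\cK_X$, both of which the paper's proof leaves implicit.
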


\begin{proof}
First recall that $\cK_X\subseteq\cC_X$ is a sub-cone.
Due to Proposition~\ref{prop:denseInPosCone}, the collection of $[\omega]\in\cK_X$ such that
$
    \Lag(X,\omega)=[\omega]^\perp\cap H^2(X,\bZ)
$
contains a vector $v$ with $v^2>0$ form a dense subset
\[
	\mathcal{S}_X\colonequals\mathcal{P}_X\cap\cK_X\subseteq\cK_X,
\]
which is a countable union of hyperplane sections and contains all rational K\"ahler classes.
For every $[\omega]\in\mathcal{P}_X\cap\cK_X$, we have $\Lag(X,\omega)=\SLag(X,\omega)$ by Theorem~\ref{thm:LagToSpLag}~\ref{withPositiveLag}, which is a nonzero lattice as it contains a positive vector.
\end{proof}

\begin{rmk}
Let $\check{X}$ be an algebraic K3 surface. Consider the Mukai lattice $\widetilde{H}(\check{X},\bZ)$ and the Mukai vector
\begin{equation}
\label{eqn:mukaiVector}
\xymatrix{
    v\colon
    D^b\mathfrak{Coh}(\check{X})\ar[r] &
    \widetilde{H}(\check{X},\bZ).
}
\end{equation}
Under the choice of a Bridgeland stability condition, every $E\in D^b\mathfrak{Coh}(\check{X})$ admits a Harder--Narasimhan filtration
\[\xymatrix@C=11pt@R=11pt{
    0 = E_0\ar[rr] && E_1\ar[dl]\ar[rr] && E_2\ar[dl]\ar[r] & \cdots\ar[r] & E_{n-1}\ar[rr] && E_n = E\ar[dl]\\
    & A_{1}\ar@{-->}[ul] && A_{2}\ar@{-->}[ul] &&&& A_{n}\ar@{-->}[ul] & 
}\]
where each $A_i$ is a semistable object and thus satisfies $v(A_i)^2\geq-2$ (see, for example, \cite{BM14}*{Theorem~2.15}).
Assume that $(X,\omega)$ is a K\"ahler K3 surface where the homological mirror symmetry
$
    \mathfrak{Fuk}(X)\cong D^b\mathfrak{Coh}(\check{X})
$
is proved, for instance \cite{Sei15} (see also \cite{She19}*{Remark~1.17}), and such that $\check{X}$ is constructed over $\bC$. Notice that usually the mirrors are constructed over non-archimedean fields. Assume further that we have a commutative diagram as follows
\[\xymatrix{
    \mathfrak{Fuk}(X)\ar[d]\ar[r]^-\sim &  D^b\mathfrak{Coh}(\check{X})\ar[d]^-v\\
    [\omega]^{\perp }\ar[r]^-\sim & \widetilde{H}^{1,1}(\check{X},\bZ),
}\]
where $[\omega]^{\perp}$ is the orthogonal complement taken inside $H^2(X,\mathbb{Z})$. Since the Mukai vector \eqref{eqn:mukaiVector} factors through the numerical Grothendieck group $K_\text{num}(\check{X})$ and induces an isometry
\[\xymatrix{
    K_\text{num}(\check{X})\ar[r]^-\sim
    & \widetilde{H}^{1,1}(\check{X}, \bZ)
}\]
(see \cite{AT14}*{\S2.2}), one can prove Theorem~\ref{thm:LagToSpLag}~\ref{withPositiveLag} for such an $X$.
\end{rmk}

\subsection{Special Lagrangian fibrations on K3 surfaces}
\label{subsect:appendix}

We now turn to the analysis of the K\"ahler classes in the K\"ahler cone of a K3 surface for which we have a special Lagrangian torus fibration.

\begin{thm}
\label{thm:SYZ}
The set $\mathcal{S}_X$ in Theorem~\ref{thm:denseInKahlerCone} contains a subset $\mathcal{T}_X$ which is dense in $\cK_X$ in the analytic topology such that every K\"ahler form $\omega$ from $\mathcal{T}_X$ guarantees the existence of a special Lagrangian fibration on $X$. Moreover, the following hold:
\begin{itemize}
    \item If $X$ satisfies the hypothesis of Corollary~\ref{cor:rationalPeriodK3}, then $\mathcal{T}_X$ is a union of countably many hyperplane sections in $\cK_X$.
    \item Otherwise, $\mathcal{T}_X$ is a union of countably many codimension two linear sections in $\cK_X$.
\end{itemize}
In both cases, $\mathcal{T}_X$ contains all rational K\"ahler classes. In particular, every polarized K3 surface admits a special Lagrangian fibration. 
\end{thm}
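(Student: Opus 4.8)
The plan is to reduce the existence of a special Lagrangian fibration with respect to $\omega$ to a single lattice-theoretic condition, namely that the Lagrangian lattice $\Lag(X,\omega)$ contains a nonzero isotropic vector, and then to run density arguments parallel to those behind Proposition~\ref{prop:denseInPosCone} and Theorem~\ref{thm:denseInKahlerCone}.

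First I would establish the geometric reduction: \emph{if $\Lag(X,\omega)$ contains a nonzero isotropic class $f$, then $X$ admits a special Lagrangian fibration with respect to $\omega$.} Since $f^2=0\ge-2$, Lemma~\ref{lemma:lag(1,1)} forces $\int_f\theta\neq0$ (otherwise $f=0$), so by Lemma~\ref{lemma:producingClassType(1,1)} there is $\zeta\in S^1$ with $\int_f\theta_\zeta=0$; that is, $f$ becomes a class of type $(1,1)$ on the hyperk\"ahler rotation $X_\zeta$ and hence lies in $\Pic(X_\zeta)$. By Lemma~\ref{lemma:effectiveClasses} applied to $X_\zeta$ we may assume, after possibly passing to $-\zeta$ as in the proof of Proposition~\ref{prop:LagToSpLags-2}, that $f$ is effective. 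Reflecting in the classes of the $(-2)$-curves of $X_\zeta$ moves $f$ into the nef cone, and crucially these reflections preserve $\omega^\perp$: every such $(-2)$-curve is a holomorphic curve on $X_\zeta$, hence a special Lagrangian on $X$ by Lemma~\ref{lemma:spLagK3}~\ref{holo-To-Lag_zeta}, so its class pairs to zero with $[\omega]$. A nef primitive isotropic class on a K3 surface defines an elliptic fibration $X_\zeta\to\bP^1$ by the standard linear-system argument, and its fibers are holomorphic curves on $X_\zeta$, hence special Lagrangian tori on $X$ by Lemma~\ref{lemma:spLagK3}~\ref{holo-To-Lag_zeta}. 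This yields the desired fibration; I note that this step uses no projectivity of $X_\zeta$, so it applies even when $\Pic(X_\zeta)=\bZ f$.

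Next I would set up $\mathcal{T}_X$ and prove density. Combining the reduction with the definition of $\mathcal{S}_X$, which requires a vector of positive square in $\Lag(X,\omega)$, the natural choice is the set of $[\omega]\in\cK_X$ for which $\Lag(X,\omega)$ contains both a nonzero isotropic vector and a vector of positive square; then $\mathcal{T}_X\subseteq\mathcal{S}_X$ by construction, and the two requirements are linked by Lemma~\ref{lemma:positiveOrNot}~\ref{nonorthogonalIsotropic}. Writing $\pi\colon H^2(X,\bR)\to H^{1,1}(X,\bR)$ for the projection as in Proposition~\ref{prop:denseInPosCone}, the condition $f\cdot\omega=0$ is equivalent to $\pi(f)\cdot\omega=0$; and since $f^2=0$ while $H^{2,0}(X)\oplus H^{0,2}(X)$ is positive definite, the orthogonal decomposition forces $\pi(f)^2\le0$, so $\pi(f)\in\cC^c$. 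Density of the rational points on the (indefinite) isotropic quadric in $H^2(X,\bR)$ then shows that the projections of rational isotropic classes are dense in $\cC^c\setminus\{0\}$, exactly as $\pi(\cQ^+)\cap\cC^c$ was shown to be dense there. In the case of Corollary~\ref{cor:rationalPeriodK3} the period supplies a fixed rational class of positive square orthogonal to all of $H^{1,1}(X,\bR)$, so only the isotropic condition is genuinely imposed and Lemma~\ref{lemma:denseHyperSec} produces a dense union of countably many hyperplane sections. Otherwise I would impose orthogonality to both an isotropic and a positive rational class and prove the codimension-two analogue of Lemma~\ref{lemma:denseHyperSec}: for any $x\in\cC_X$ the subspace $x^\perp\cap H^{1,1}(X,\bR)$ is negative definite of dimension $19$, hence contains two independent vectors, each approximable by a projection of a rational isotropic (respectively positive) class; approximating them and using that $\cC_X$ is open yields points of $\mathcal{T}_X$ arbitrarily close to $x$, giving a dense union of countably many codimension-two linear sections.

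Finally, to see that $\mathcal{T}_X$ contains every rational K\"ahler class I would reuse the Eichler normal form from the end of the proof of Proposition~\ref{prop:denseInPosCone}: after rescaling to a primitive $w\in H^2(X,\bZ)$ and choosing an isometry $H^2(X,\bZ)\cong\Lambda_{\Kthree}=U^{\oplus3}\oplus E_8^{\oplus2}$ with $w=(e+mf,0,0,0,0)$, the complement $w^\perp$ visibly contains the second hyperbolic plane $\langle e',f'\rangle$, which supplies both an isotropic class $e'$ and a positive class $e'+f'$ orthogonal to $w$. Hence $w\in\mathcal{T}_X$ in either case, and in particular an ample class on a polarized K3 surface lies in $\mathcal{T}_X$, giving the special Lagrangian fibration. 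The main obstacle is the geometric reduction of the first step: passing from the purely numerical datum of an isotropic lattice vector to an honest special Lagrangian fibration requires realizing $f$ as an elliptic fiber class after hyperk\"ahler rotation, checking that the reflections making $f$ nef remain inside $\omega^\perp$, and using that the elliptic fibration exists even when $X_\zeta$ is non-projective; the codimension-two density estimate is the main new technical point, but it follows the template already established.
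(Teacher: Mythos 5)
Your proposal is correct and follows essentially the same route as the paper: reduce to the existence of a nonzero isotropic vector in $\Lag(X,\omega)$ via hyperk\"ahler rotation plus the elliptic-fibration argument (the paper's first paragraph of the proof), then establish density of such K\"ahler classes using projections of rational isotropic and positive classes with the same case split along Corollary~\ref{cor:rationalPeriodK3}, and handle rational K\"ahler classes by the same Eichler normal form. The only minor deviation is your direct approximation proof of the codimension-two density statement, which avoids the disjointness of $\mathcal{A}$ and $\mathcal{B}$ that the paper derives from the failure of the hypothesis of Corollary~\ref{cor:rationalPeriodK3} and feeds into its Lemma~\ref{lemma:denseCodimTwoSec}; both arguments are valid and formalize the same idea.
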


The proof of the theorem is based on the following proposition which is similar to Proposition~\ref{prop:denseInPosCone}.

\begin{prop}
\label{prop:denseInPosConeIso}
On a K3 surface $X$, the classes $w\in\cC_X$ whose orthogonal complement $w^\perp\cap H^2(X,\bZ)$ contains a nonzero vector $\ell$ with $\ell^2=0$ form a dense subset $\mathcal{R}_X\subseteq\cC_X$ in the analytic topology.
This subset is a union of countably many hyperplane sections and it contains $\cC_X\cap H^{1,1}(X, \bQ)$.
\end{prop}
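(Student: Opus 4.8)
The plan is to follow the proof of Proposition~\ref{prop:denseInPosCone} almost verbatim, replacing the rational positive classes by rational isotropic classes and again invoking Lemma~\ref{lemma:denseHyperSec}. First I would set
\[
    \cQ^0 \colonequals \{x \in H^2(X,\bQ) : x^2 = 0,\ x \neq 0\}
    \quad\text{and}\quad
    \cQ^0_\bR \colonequals \{x \in H^2(X,\bR) : x^2 = 0,\ x \neq 0\}.
\]
As in the positive case, clearing denominators shows that $w\in\mathcal{R}_X$ exactly when $w\in x^\perp\cap\cC_X$ for some $x\in\cQ^0$, so $\mathcal{R}_X = \bigcup_{x\in\cQ^0} x^\perp\cap\cC_X$. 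Using the orthogonal projection $\pi\colon H^2(X,\bR)\to H^{1,1}(X,\bR)$ with kernel $P\colonequals(H^{2,0}(X)\oplus H^{0,2}(X))_\bR$, together with the identity $x^\perp\cap\cC_X = \pi(x)^\perp\cap\cC_X$ (valid since $\cC_X\subseteq H^{1,1}(X,\bR)$, which is orthogonal to $P$), I would rewrite this as $\mathcal{R}_X = \bigcup_{a\in\pi(\cQ^0)} a^\perp\cap\cC_X$ and aim to apply Lemma~\ref{lemma:denseHyperSec} with $\mathcal{A}\colonequals\pi(\cQ^0)$.

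The next step is a short linear-algebra computation of $\pi(\cQ^0_\bR)$. Writing $x = y+z$ with $y=\pi(x)\in H^{1,1}(X,\bR)$ and $z\in P$, orthogonality gives $x^2 = y^2 + z^2$; since $P$ is positive definite of signature $(2,0)$, the quantity $z^2$ sweeps out $[0,\infty)$, so the equation $x^2=0$ is solvable in $z$ precisely when $y^2\le0$, and $x\neq0$ forces $y\neq0$. Thus $\pi(\cQ^0_\bR) = \cC^c\setminus\{0\}$. In particular $\pi(\cQ^0)\subseteq\cC^c\setminus\{0\}$, so unlike Proposition~\ref{prop:denseInPosCone} there is no positive part $\mathcal{A}^c$ to discard. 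The same observation shows $0\notin\pi(\cQ^0)$ with no assumption on $X$ (a rational isotropic class cannot sit in the positive-definite plane $P$), which is why no analogue of the hypothesis of Corollary~\ref{cor:rationalPeriodK3} is needed here.

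The crux, and the step I expect to be the main obstacle, is density of $\mathcal{A}=\pi(\cQ^0)$ in $\cC^c\setminus\{0\}=\pi(\cQ^0_\bR)$. Because $\pi$ is continuous and surjective onto this set, it is enough to show that $\cQ^0$ is dense in $\cQ^0_\bR$; the density then passes through $\pi$, since a sequence in $\cQ^0$ converging to $x\in\cQ^0_\bR$ maps to a sequence in $\mathcal{A}$ converging to $\pi(x)$. In Proposition~\ref{prop:denseInPosCone} the corresponding set $\cQ^+_\bR$ was open, so rational density was automatic, whereas here $\cQ^0_\bR$ is the real locus of the smooth quadric $\{x^2=0\}\subseteq\bP(H^2(X,\bR))$, a measure-zero hypersurface. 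I would handle this by noting that the quadric has rational points (for instance the isotropic generators of any copy of $U$ in $\Lambda_{\Kthree}$); a smooth quadric over $\bQ$ possessing a rational point is $\bQ$-rational, hence its rational points are dense in its real points, and rescaling by nonzero rationals promotes this to density of $\cQ^0$ in $\cQ^0_\bR$.

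Finally I would collect the consequences. Lemma~\ref{lemma:denseHyperSec} gives that $\mathcal{R}_X=\bigcup_{a\in\mathcal{A}}a^\perp\cap\cC_X$ is dense in $\cC_X$; countability of $\cQ^0$ makes $\mathcal{A}$ countable, so $\mathcal{R}_X$ is a union of countably many hyperplane sections. To see $\cC_X\cap H^{1,1}(X,\bQ)\subseteq\mathcal{R}_X$, I would reuse the Eichler normalization from Proposition~\ref{prop:denseInPosCone}: a primitive integral class $w\in\cC_X$ can be placed in the first copy of $U$ in $\Lambda_{\Kthree}$, and then any isotropic generator $e'$ of the second copy of $U$ satisfies $(e')^2=0$ and $e'\cdot w=0$, which exhibits $w\in\mathcal{R}_X$.
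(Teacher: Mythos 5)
Your proof is correct, and its outline coincides with the paper's: replace the rational positive classes of Proposition~\ref{prop:denseInPosCone} by rational isotropic ones, project them to $H^{1,1}(X,\bR)$, apply Lemma~\ref{lemma:denseHyperSec}, and obtain $\cC_X\cap H^{1,1}(X,\bQ)\subseteq\mathcal{R}_X$ from the Eichler argument. The two places where you deviate are exactly the two places where the paper is imprecise, and in both cases your version is the better one. First, you compute $\pi(\cQ^0_\bR)=\cC^c\setminus\{0\}$, whereas the paper claims surjectivity onto all of $H^{1,1}(X,\bR)$ via $x=y-cz$ with $c=\sqrt{y^2/z^2}$: since $y\cdot z=0$, that vector satisfies $x^2=y^2+c^2z^2=2y^2$, so it is isotropic only when $y^2=0$ (and $c$ is not even real when $y^2<0$); the construction works, with $c=\sqrt{-y^2/z^2}$, precisely when $y^2\le 0$, which is your statement. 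The slip is harmless for the paper's conclusion (it intersects with $\cC^c$ afterwards anyway), and your observation that $\pi(\cQ^0)$ automatically lands in $\cC^c\setminus\{0\}$ even removes the step of discarding $\mathcal{B}^c$ by positivity of the intersection form on $\cC_X$. Second, you rightly identify the crux: unlike $\cQ^+\subseteq\cQ^+_\bR$ in Proposition~\ref{prop:denseInPosCone}, density of $\cQ^0$ in $\cQ^0_\bR$ is not automatic because the isotropic cone has empty interior, and your justification --- the quadric $x^2=0$ has a rational point since $\Lambda_{\Kthree}$ contains $U$, a smooth quadric with a rational point is $\bQ$-rational, and its rational points are therefore dense in its real locus, which rescales to density on the affine cone --- is sound and standard. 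The paper asserts this density with no argument (``Note that $\cQ\subseteq\cQ_\bR$ is dense''), so your proposal supplies a justification that the published proof omits.
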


\begin{proof}
The proof is analogous to the proof of Proposition~\ref{prop:denseInPosCone}, but with the following changes. First of all, replace $\mathcal{Q}^+$ with
\[
	\cQ\colonequals\{x\in H^2(X,\bQ):x^2=0\}\setminus\{0\},
\]
so that 
\[
	\mathcal{R}_X
	=\bigcup_{\ell\in\cQ}
	\ell^\perp\cap\cC_X=\bigcup_{\ell\in\pi(\cQ)}
	\ell^\perp\cap\cC_X.
\]
Since $(H^{2,0}(X)\oplus H^{0,2}(X))_\bQ$ is either empty or positive definite, we have $0\notin\pi(\mathcal{Q})$, which implies that $\mathcal{B}\colonequals\pi(\mathcal{Q})\cap\mathcal{C}^c$ does not contain $\{0\}$.

To show that $\mathcal{B}\subseteq\cC^c\setminus\{0\}$ is dense, consider
\[
    \cQ_\bR\colonequals
    \{x\in H^2(X,\bR):x^2=0\}\setminus\{0\}.
\]
Note that $\cQ\subseteq\cQ_\bR$ is dense.
We claim that
\[
\pi(\cQ_\bR)=H^{1,1}(X,\bR).
\]
Indeed, the intersection pairing on
$
    (H^{2,0}(X)\oplus H^{0,2}(X))_\bR
$
has signature $(2,0)$, so this space contains a vector $z$ such that $z^2>0$.
For every $y\in H^{1,1}(X,\bR)$, define
\[
    x\colonequals y-cz,
\]
where $c\colonequals\sqrt{y^2/z^2}\in\bR$. Then $\pi(x)=y$ and $x$ lies in $\cQ_\bR$.
As a result, $\pi$ restricts to a surjective map
\[\xymatrix{
    \eta\colon \cQ_\bR\ar[r] & H^{1,1}(X,\bR).
}\]
It follows that $\eta(\cQ)$ is dense in $H^{1,1}(X,\bR)$,
so $\mathcal{B}=\eta(\cQ)\cap\cC^c$ is dense in $\cC^c\setminus\{0\}$.

Define $\mathcal{B}^c\colonequals\eta(\cQ)\cap\cC$, the complement of $\mathcal{B}$ in $\eta(\cQ)$.
Then
\[
    \mathcal{R}_X = \left(
    \bigcup_{a\in\mathcal{B}}a^\perp\cap\cC_X
    \right)\cup\left(
    \bigcup_{b\in\mathcal{B}^c}b^\perp\cap\cC_X
    \right)
    = \bigcup_{a\in\mathcal{B}}a^\perp\cap\cC_X.
\]
The second equality follows from the fact that $x\cdot y>0$ for all $x,y\in\cC_X$ \cite{BHPV04}*{IV~Corollary~7.2}. By applying Lemma~\ref{lemma:denseHyperSec} again, we conclude that $\mathcal{R}_X$ is dense in $\cC_X$.

Finally, $\mathcal{R}_X$ contains all the rational K\"ahler classes $[\omega]$ since we can always find a nonzero isotropic vector in $[\omega]^\perp$ (see the end of the proof of Proposition~\ref{prop:denseInPosCone}).
\end{proof}

\begin{lemma}
\label{lemma:denseCodimTwoSec}
Let $\cC^c$ be the complement of $\cC$ in $H^{1,1}(X,\bR)$.
Suppose that $\mathcal{A},\mathcal{B}\subseteq\cC^c\setminus\{0\}$ are disjoint dense subsets.
Then the union
\[
    \bigcup_{(a,b)\in\mathcal{A}\times\mathcal{B}}a^\perp\cap b^\perp\cap\mathcal{C}_X
\]
is dense in $\cC_X$.
\end{lemma}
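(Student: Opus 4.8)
The plan is to mimic the proof of Lemma~\ref{lemma:denseHyperSec}, upgrading the single orthogonal complement to an intersection of two. Recall that $H^{1,1}(X,\bR)$ has signature $(1,19)$, so for any $x\in\cC_X$ (where $x^2>0$) the orthogonal complement $x^\perp\subseteq H^{1,1}(X,\bR)$ is negative definite of dimension $19$. In particular every nonzero vector of $x^\perp$ has negative self-intersection and hence lies in $\cC^c\setminus\{0\}$, and since $\dim x^\perp=19\geq 2$ we may choose two linearly independent vectors $a,b\in x^\perp\setminus\{0\}$. Writing
\[
    \Delta\colonequals\{(a,b)\in(\cC^c\setminus\{0\})^2:a,b\text{ linearly independent}\},
\]
this yields the containment
\[
    \cC_X\subseteq\bigcup_{(a,b)\in\Delta}a^\perp\cap b^\perp,
\]
the two-variable analogue of the inclusion $\cC_X\subseteq\bigcup_{a\in\cC^c\setminus\{0\}}a^\perp$ used before.

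Next I would introduce the incidence variety
\[
    \mathcal{W}\colonequals\{(a,b,z)\in\Delta\times H^{1,1}(X,\bR):z\in a^\perp\cap b^\perp\}
\]
together with the projections $p_1\colon\mathcal{W}\to\Delta$ and $p_2\colon\mathcal{W}\to H^{1,1}(X,\bR)$. For linearly independent $a,b$ the fiber $a^\perp\cap b^\perp$ has constant dimension $18$, so $p_1$ is a rank-$18$ vector bundle and in particular a continuous open map; the image of $p_2$ is exactly $\bigcup_{(a,b)\in\Delta}a^\perp\cap b^\perp$, which by the previous step contains $\cC_X$.

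The remaining point is to show the relevant parameters are dense in $\Delta$. Since $\mathcal{A}$ and $\mathcal{B}$ are dense in $\cC^c\setminus\{0\}$, the product $\mathcal{A}\times\mathcal{B}$ is dense in $(\cC^c\setminus\{0\})^2$; and $\Delta$ is open there, its complement being the closed locus of linearly dependent pairs. Hence $(\mathcal{A}\times\mathcal{B})\cap\Delta$ is dense in $\Delta$. Because $p_1$ is open and continuous, $p_1^{-1}((\mathcal{A}\times\mathcal{B})\cap\Delta)$ is dense in $\mathcal{W}$, and by continuity of $p_2$ its image
\[
    \bigcup_{(a,b)\in(\mathcal{A}\times\mathcal{B})\cap\Delta}a^\perp\cap b^\perp
\]
is dense in $p_2(\mathcal{W})\supseteq\cC_X$. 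Finally, as $\cC_X$ is open in $H^{1,1}(X,\bR)$, the intersection of a dense set with the open set $\cC_X$ is dense in $\cC_X$; since the resulting set is contained in $\bigcup_{(a,b)\in\mathcal{A}\times\mathcal{B}}a^\perp\cap b^\perp\cap\cC_X$, the latter is dense in $\cC_X$, as claimed.

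The hard part is conceptually small but worth isolating: guaranteeing that one can always produce a \emph{linearly independent} pair $(a,b)$ of negative vectors orthogonal to a given $x\in\cC_X$ — this is exactly where the signature $(1,19)$ of $H^{1,1}(X,\bR)$, equivalently the negative-definiteness of $x^\perp$, is essential — and confirming that restricting the correspondence to the open locus $\Delta$ keeps $p_1$ an honest vector bundle so that the openness argument transferring density from base to total space still applies. The disjointness hypothesis on $\mathcal{A},\mathcal{B}$ ensures $a\neq b$ for every sampled pair; passing further to $\Delta$ upgrades this to linear independence, which is what makes $a^\perp\cap b^\perp$ a genuine codimension-two section, as needed for Theorem~\ref{thm:SYZ}.
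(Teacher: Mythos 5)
Your proof is correct and follows the same overall strategy as the paper's: cover $\cC_X$ by codimension-two orthogonal complements, form the incidence correspondence, and transfer density from the base through an open projection. But you make one genuinely different choice that is in fact an improvement. The paper takes as base the off-diagonal locus $\mathcal{E}=(\cC^c\setminus\{0\})^{\times2}\setminus\Delta$ (pairs of \emph{distinct} vectors, with $\Delta$ the diagonal) and asserts that $\pi_{12}$ is a vector bundle fibered in $18$-dimensional spaces; this is not quite right, since over a pair of distinct but \emph{proportional} vectors, say $(a,2a)\in\mathcal{E}$, the fiber is $a^\perp\cap(2a)^\perp=a^\perp$, which is $19$-dimensional, and one can check that $\pi_{12}$ even fails to be open at such points (perturbing $2a$ to $2a+\epsilon c$ with $c\cdot z\neq0$ moves the fiber to $a^\perp\cap c^\perp$, which stays a fixed positive distance from $z$). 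Your base --- the open locus of linearly \emph{independent} pairs, which you call $\Delta$, clashing with the paper's notation --- is exactly what makes the correspondence an honest rank-$18$ vector bundle, so the openness argument is sound; and your preliminary observation that $x^\perp$ is negative definite of dimension $19$ for $x\in\cC_X$, hence contains two linearly independent vectors of $\cC^c\setminus\{0\}$, supplies the stronger covering statement needed over this smaller base (the paper only produces two distinct orthogonal vectors). A side effect is that your argument never uses the disjointness of $\mathcal{A}$ and $\mathcal{B}$: intersecting $\mathcal{A}\times\mathcal{B}$ with the open dense locus of independent pairs replaces it, whereas the paper invokes disjointness precisely to ensure $\mathcal{A}\times\mathcal{B}\subseteq\mathcal{E}$. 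In short, your proposal is a corrected and slightly strengthened version of the paper's proof.
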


\begin{proof}
First of all, let $\Delta\subseteq(\cC^c\setminus\{0\})^{\times2}$ denote the diagonal and define
\[
    \mathcal{E}\colonequals(\cC^c\setminus\{0\})^{\times2}\setminus\Delta.
\]
We claim that
\begin{equation}
\label{eqn:coneInCodimTwo}
    \cC_X\subseteq\bigcup_{(a,b)\in\mathcal{E}}a^\perp\cap b^\perp.
\end{equation}
Indeed, for every $x\in\cC_X$, we have $x^\perp\cap\cC_X=\emptyset$ by \cite{BHPV04}*{IV~Corollary~7.2}. This implies that $x^\perp\subseteq\cC^c$, as it was already explained in the proof of Lemma~\ref{lemma:denseHyperSec}. So there exist $a,b\in\overline{\cC}^c$, $a\neq b$ such that $a\cdot x=0$ and $b\cdot x=0$, implying the containment in \eqref{eqn:coneInCodimTwo}.

By \eqref{eqn:coneInCodimTwo}, it is sufficient to show that
\begin{equation}
\label{eqn:denseHyperInCodimTwo}
    \bigcup_{(a,b)\in\mathcal{A}\times\mathcal{B}}a^\perp\cap b^\perp
    \subseteq
    \bigcup_{(a,b)\in\mathcal{E}}a^\perp\cap b^\perp
\end{equation}
is a dense subset.
For simplicity of notation, let $V\colonequals H^{1,1}(X,\bR)$. Consider the diagram
\[\xymatrix{
    & \mathcal{U}\colonequals\{(a,b,z)\in\mathcal{E}\times V : a^\perp\cap b^\perp\ni z\}\ar[dl]_-{\pi_{12}}\ar[dr]^-{\pi_3} &\\
    \mathcal{E} && V.
}\]
Then $\pi_{12}$ is a vector bundle fibered in $18$-dimensional real vector spaces, and the image of $\pi_3$ coincides with the left hand side of \eqref{eqn:denseHyperInCodimTwo}. In particular, $\pi_{12}$ is an open map. 
Because $\mathcal{A}\times\mathcal{B}$ is dense in $\mathcal{E}$, $\pi_{12}^{-1}(\mathcal{A}\times\mathcal{B})$ is dense in $\mathcal{U}$ and thus
\[
    \bigcup_{(a,b)\in\mathcal{A}\times\mathcal{B}}a^\perp\cap b^\perp
    = \pi_3(\pi_{12}^{-1}(\mathcal{A}\times\mathcal{B}))
\]
is dense in the image of $\pi_3$, which completes the proof.
\end{proof}

\begin{proof}[Proof of Theorem~\ref{thm:SYZ}]
Given $[\omega]\in\cK_X$, assume there exist $\ell\in[\omega]^\perp$ such that $\ell$ is a nonzero isotropic vector. Then by Lemma~\ref{lemma:holo-HK-Lag}~\ref{lag-To-Holo}, we can perform a hyperk\"ahler rotation to make $\ell$ a $(1,1)$-class, which still satisfies $\ell^2=0$.
The linear system $|L|$ given by $\ell$ may contain smooth rational curves as base locus, which can be removed by applying suitable reflections with respect to $(-2)$-curves. (Notice that these are Hodge isometries.)
Now $|L|$ induces an elliptic fibiration. By reversing the hyperk\"ahler rotation, the fibration turns into a special Lagrangian torus fibration on $X$ with respect to $\omega$ (this strategy is well know, and was also used in \cite{OO21}). So what we need to do is to study the distribution of K\"ahler classes for which such vector $\ell$ exist.

Suppose that we are in the situation of Corollary~\ref{cor:rationalPeriodK3}. Retain the notation from Proposition~\ref{prop:denseInPosConeIso} and define
\[
    \mathcal{T}_X\colonequals
    \mathcal{R}_X\cap\mathcal{K}_X.
\]
In this case, we have $\mathcal{T}_X\subseteq\mathcal{S}_X$ since $\mathcal{S}_X = \cK_X$ by Theorem~\ref{thm:denseInKahlerCone}.
By definition, every $[\omega]\in\mathcal{T}_X$ lies in $\mathcal{R}_X$, so there exists a nonzero isotropic vector $\ell\in[\omega]^\perp$ by Proposition~\ref{prop:denseInPosConeIso}.

Now assume that the hypotheses of Corollary~\ref{cor:rationalPeriodK3} are not satisfied. 
Consider the two sets
\[
    \cQ^+\colonequals\{x\in H^2(X,\bQ):x^2>0\},\;
    \cQ\colonequals\{x\in H^2(X,\bQ):x^2=0\}\setminus\{0\},
\]
project them into $H^{1,1}(X,\bR)$ using the map
\[\xymatrix{
    \pi\colon H^2(X,\bR)\ar[r] & H^{1,1}(X,\bR),
}\]
and let $\mathcal{A}$ and $\mathcal{B}$, respectively, be their intersections with $\cC^c\setminus\{0\}$.
From the proofs of Propositions~\ref{prop:denseInPosCone} and \ref{prop:denseInPosConeIso}, we know that both $\mathcal{A}$ and $\mathcal{B}$ are dense in $\cC^c\setminus\{0\}$.
We claim that $\mathcal{A}$ and $\mathcal{B}$ are disjoint from each other.
Suppose not, that is, there exists
\[
    x\in \mathcal{A}\cap\mathcal{B}
    \subseteq H^{1,1}(X,\bR).
\]
So $\pi^{-1}(x)$ contains nonzero vectors $v,\ell\in H^2(X,\bQ)$ satisfying $v^2>0$ and $\ell^2=0$.
In this case, we can write
\[
    v = x + s
    \quad\text{and}\quad
    \ell = x + t
\]
for some $s,t\in(H^{2,0}(X)\oplus H^{0,2}(X))_\bR$.
Take the subtraction
\[
    \xi\colonequals
    v-\ell = (x+s) - (x+t) = s-t.
\]
Note that $(v-\ell)\in H^2(X,\bQ)\setminus\{0\}$ and that $(s-t)\in H^{2,0}(X)\oplus H^{0,2}(X)$.
Hence $\xi$ is a nonzero vector in
\[
    (H^{2,0}(X)\oplus H^{0,2}(X))\cap H^2(X,\bQ).
\]
But this contradicts to our hypothesis.

Now we can apply Lemma~\ref{lemma:denseCodimTwoSec} to conclude that the set
\[
\bigcup_{(a,b)\in\mathcal{A}\times\mathcal{B}}a^\perp\cap b^\perp\cap\mathcal{C}_X
\]
is dense in $\mathcal{C}_X$. Define
\[
\mathcal{T}_X\colonequals\bigcup_{(a,b)\in\mathcal{A}\times\mathcal{B}}a^\perp\cap b^\perp\cap\mathcal{K}_X
\]
which is dense in $\mathcal{K}_X$.
We note that this is a countable union of codimension two linear subspaces in $\mathcal{S}_X$, and that it contains all the rational K\"ahler classes.
The latter is true because we can find two nonzero vectors $v,\ell$ in $[\omega]^\perp$ such that $v^2>0$ and $\ell^2=0$ (see the end of the proof of Proposition~\ref{prop:denseInPosCone}).
\end{proof}

The following is an immediate application.

\begin{cor}
\label{cor:3ToriInCY3}
Let $Y$ be a Fano $3$-fold and $D\in |-K_Y|$ be a smooth anti-canonical divisor. Then the log Calabi--Yau $3$-fold $X=Y\backslash D$ contains infinitely many special Lagrangian $3$-tori.
\end{cor}

\begin{proof}
Since $Y$ is Fano $3$-fold, $D$ is an algebraic K3 surface \cite{Bea04}. Then $D$ admits a special Lagrangian fibration by Theorem~\ref{thm:SYZ}. Let $L\subseteq D$ be any of the special Lagrangian $2$-torus. By \cite{CJL21}*{Theorem~1.1}, there exist infinitely many special Lagrangians in $X$ diffeomorphic to $S^1\times L\cong T^3$.
\end{proof}

\bigskip
\bibliography{SpLagK3_bib}
\bibliographystyle{alpha}

\ContactInfo

\end{document}